\newtheorem{lemma}{Lemma}[section]
\newtheorem{theo}[lemma]{Theorem}
\newtheorem{conv}[lemma]{Convention}
\newtheorem{cor}[lemma]{Corollary}
\newtheorem{conj}[lemma]{Conjecture}
\theoremstyle{definition}
\newtheorem{defin}[lemma]{Definition}
\newtheorem{example}[lemma]{Example}
\newtheorem{remark}[lemma]{Remark}
\newenvironment{eq}{\begin{equation}}{\end{equation}}
\renewcommand{\Ref}[1]{(\ref{#1})}
\newcommand{\Char}{\mathop{\rm char}}
\newcommand{\ver}{\mathop{\rm ver}}
\newcommand{\arr}{\mathop{\rm arr}}
\newcommand{\FF}{\mathbb{F}}
\newcommand{\CC}{\mathbb{C}}
\newcommand{\ZZ}{\mathbb{Z}}
\newcommand{\calcA}{\mathcal{A}}
\newcommand{\calcB}{\mathcal{B}}
\newcommand{\calcL}{\mathcal{L}}
\newcommand{\Set}{\mathrm{Set}}    
\newcommand{\GL}{{\rm GL}}         
\newcommand{\SL}{{\rm SL}}         
\newcommand{\SO}{{\rm SO}}         
\newcommand{\Sp}{{\rm Sp}}         
\newcommand{\can}{{\rm can}}               
\newcommand{\calcP}[1]{\mathcal{P}_{#1}}  
\newcommand{\F}[1]{\mathcal{F}_{#1}}  
\newcommand{\Alt}{\mathrm{Alt}}       
\newcommand{\si}{\sigma}
\newcommand{\al}{\alpha}
\newcommand{\be}{\beta}
\newcommand{\ga}{\gamma}
\newcommand{\la}{\lambda}
\newcommand{\de}{\delta}
\newcommand{\ov}[1]{\overline{#1}}
\newcommand{\un}[1]{{\underline{#1}} }
\newcommand{\algX}[1]{{\rm alg}_{\FF}\{X\}_{#1}}
\newcommand{\diag}{\mathop{\rm diag}}
\newcommand{\tr}{\mathop{\rm tr}}
\newcommand{\rank}{\mathop{\rm rank}}
\newcommand{\matr}[4]{\left(
\begin{array}{cc}
#1 & #2 \\ 
#3 & #4 \\ 
\end{array}
\right)}
\newcommand{\matrTri}[9]{
\left(\begin{array}{ccc}
#1 &  #2 & #3 \\
#4 &  #5 & #6 \\
#7 &  #8 & #9 \\
\end{array}
 \right)
} 
\newcommand{\TD}{{\rm TD}} 
\begin{document}
\title[Pairs of matrices with simple spectrum]{Pairs of matrices with simple spectrum}

\thanks{The second author was supported by FAPESP 2021/01690-7, FAEPEX 2429/22}


\author{Jennyfer Juliana Calderón Moreno}
\address{Jennyfer Juliana Calderón Moreno\\ State University of Campinas, 651 Sergio Buarque de Holanda, 13083-859 Campinas, SP, Brazil}
\email{j235404@dac.unicamp.br (Jennyfer Juliana Calderón Moreno)}

\author{Artem Lopatin}
\address{Artem Lopatin\\ State University of Campinas, 651 Sergio Buarque de Holanda, 13083-859 Campinas, SP, Brazil}
\email{dr.artem.lopatin@gmail.com (Artem Lopatin)}

\begin{abstract} We established two-sided Curto--Herrero conjecture for pairs of matrices, where the first matrix has a simple spectrum. Namely, it is shown that these pairs are separated by ranks of non-commutative polynomials in matrices.  
Moreover, we provided some upper bound on degrees of non-commutative polynomials which should be considered.

\noindent{\bf Keywords:} Belitskii’s algorithm, Canonical matrices, Polynomial invariants, Separating invariants.

\noindent{\bf 2020 MSC:} 15A21; 16R30; 13A50.  
\end{abstract}

\maketitle

\section{Introduction}

All vector spaces and matrices are over an arbitrary (possibly finite) field $\FF$ of characteristic $\Char{\FF}\geq0$, unless otherwise stated. We fix some  total order over $\FF$ considered as an abstract set.  By an algebra we always mean an associative algebra.

\subsection{Similarity of $m$-tuples of matrices}\label{section_conj}

The space $M_n^m$ of $m$-tuples of $n\times n$ matrices with $n\geq2$ is a $\GL_n$-module with respect to the diagonal conjugation: $g\cdot (A_1,\ldots,A_m)=(gA_1 g^{-1},\ldots, gA_mg^{-1})$. It is well-known that the problem of classification of all $\GL_n$-orbits on $M_n^m$ for $m\geq2$ is ``impossible''~\cite{LeBruyn_1995}, since this classification problem for matrix pairs contains the classification problem for arbitrary systems of linear mappings on vector spaces~\cite{Belitskii_Sergeichuk_2003, Gelfand_Ponomarev_1969}.  

In 1985 Curto and Herrero  formulated a conjecture (see Conjecture 8.14 of~\cite{CurtoHerrero_1985}) that $\un{A}\in M_n^m$ lies in the closure of $\GL_n$-orbit of $\un{B}\in M_n^m$ if and only if $\rank f(\un{A}) \leq \rank f(\un{B})$ for every (noncommutative) polynomial $f$ in $m$ variables. Hadwin and Larson in 2003 gave a counterexample (see Example 5 of~\cite{HadwinLarson_2003}) to the weaker two-sided Curto--Herrero conjecture:

\begin{conj}[Two-sided Curto--Herrero conjecture]\label{conj_CH}
For every $\un{A},\un{B}\in M_n^m$ we have that $\GL_n$-orbits of  $\un{A}$ and $\un{B}$ coincide if and only if $\rank f(\un{A}) = \rank f(\un{B})$ for every (noncommutative) polynomial $f$ in $m$ variables (possibly with a free term). 
\end{conj}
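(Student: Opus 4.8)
The forward implication has no content: conjugation commutes with evaluation of noncommutative polynomials, so if $\un{B}=g\cdot\un{A}$ then $f(\un{B})=g\,f(\un{A})\,g^{-1}$ — the free term only contributing the conjugation-invariant summand $c_0 I_n$ — and hence $\rank f(\un{B})=\rank f(\un{A})$ for every $f$. All of the difficulty is in the converse: reconstructing the $\GL_n$-orbit of $\un{A}\in M_n^m$ from the function $f\mapsto\rank f(\un{A})$.

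The plan is to replace ``all $f$'' by a finite, explicit list of affine noncommutative polynomials of bounded degree, and then to decode the orbit from the corresponding finitely many ranks. The steps, in order, are: (i) fix a canonical representative in each $\GL_n$-orbit of $M_n^m$, produced by Belitskii's reduction algorithm, so that two tuples are conjugate iff their canonical forms coincide; (ii) show that every parameter of the canonical form is recovered from ranks of explicit polynomials — the eigenvalues and Jordan data of each $A_i$ from $\rank\bigl(\prod_j (A_i-\lambda_j I)^{k_j}\bigr)$, the block ranks of $A_{i'}$ relative to the generalized eigenspace decomposition of $A_i$ from mixed words such as $\rank\bigl(p(A_i)\,A_{i'}\,q(A_i)\bigr)$, and, inductively along the Belitskii recursion, the finer gluing parameters from iterated expressions of the same kind; (iii) bound the degree by tracking that every polynomial used is a product of at most $O(n)$ affine factors, so that polynomials of degree $\le cn$ already separate orbits.

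The main obstacle is that this program cannot go through for arbitrary $m$-tuples: by Example 5 of \cite{HadwinLarson_2003}, Conjecture \ref{conj_CH} fails in general — there exist non-conjugate $\un{A},\un{B}$ with $\rank f(\un{A})=\rank f(\un{B})$ for all $f$ — so no family of rank conditions can separate all orbits, and the statement must be attacked under extra hypotheses. The obstruction sits exactly at step (ii): for a general pair, Belitskii's algorithm performs reductions whose output depends on fine relations among matrix entries to which the rank functions are blind, so two orbits may agree on every $\rank f$ and still differ in a residual parameter. One is therefore forced to restrict to a class on which the Belitskii reduction is rigid, and the target that is actually reachable is the case of pairs $(A_1,A_2)$ with $A_1$ of simple spectrum: then $A_1$ is diagonalizable with $n$ distinct eigenvalues, each detected by $\rank(A_1-\lambda I)=n-1$; the stabilizer collapses to the normalizer of a maximal torus; and, in the eigenbasis of $A_1$, the canonical form of $A_2$ is determined, up to this residual torus-and-permutation action, by the ranks of the words $p_i(A_1)\,A_2\,p_j(A_1)$ and their iterates, which can be assembled into a complete conjugacy invariant of degree $O(n)$. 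Carrying out this rigidity analysis — proving that the listed ranks genuinely pin down $A_2$ modulo the residual group, and extracting the sharp degree bound — is the technical heart of the proof.
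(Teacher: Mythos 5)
You have correctly read the situation: the statement is a \emph{conjecture} that the paper itself declares false in general (for $n\geq 3$, $m\geq 2$, via the Hadwin--Larson counterexample reproduced in Lemma~\ref{lemma_ex1}), and the only thing actually proved is the restricted version for pairs in $D_n$ (Theorem~\ref{theo_main2}). Your forward implication is fine, your identification of the reachable target is correct, and your sketch for $D_n$ is essentially the paper's route: reduce to the Futorny--Horn--Sergeichuk canonical pairs, use degree-$(n-1)$ polynomials $H_i$ in $X_1$ with $H_i(A_1)=E_{ii}$ (Lemma~\ref{lemma_trivial}), and read off entries of $A_2$ from $\rank\bigl(H_i X_2 H_j\bigr)$ and iterates thereof.

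Two points where your sketch diverges from, or falls short of, what the paper actually does. First, the degree bound: you claim polynomials of degree $O(n)$ suffice, but each building block $H_i X_2 H_j$ already has degree $2n-1$, and pinning down a free parameter at position $(i,j)$ requires comparing it against the product of arrow-entries along the (unique, undirected, possibly orientation-mixed) path joining $v_i$ to $v_j$ in the forest; this compounds up to $n+1$ such blocks, giving the paper's bound $(n+1)(2n-1)=O(n^2)$. Second, the step you describe as ``assembling the iterates into a complete invariant'' is precisely the nontrivial combinatorial content: when the connecting path has arrows of both orientations, the matrices $E_{i_l i_{l+1}}^{\de_l}$ cannot simply be multiplied out, and the paper's Lemma~\ref{lemma_TD} and Corollary~\ref{cor_TD} construct either a single word collapsing the path or a staircase alternating sum whose rank jumps by one exactly when the tested scalar equals the free parameter. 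Without that device (or an equivalent one) the residual torus action is not actually quotiented out, so this gap would need to be filled before your step (ii) is a proof.
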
%

\noindent{}Namely, Conjecture~\ref{conj_CH} does not hold in general for $n\geq3$ and $m\geq 2$. For the sake of completeness we present a modified version of the counterexample of Hadwin and Larson in the proof of Lemma~\ref{lemma_ex1}. On the other hand, Conjecture~\ref{conj_CH} still may be valid for some particular classes of $m$-tuples of matrices.   Note that using the rational canonical form of a matrix it is easy to see that Conjecture~\ref{conj_CH} holds for $m=1$.

Let us remark that recently Derksen, Klep, Makam, Vol\v{c}i\v{c} proved the two-sided Hadwin-Larson conjecture that claims that $\GL_n$-orbits of  $\un{A},\un{B}\in M_n^m$ coincide if and only if $\rank F(\un{A}) = \rank F(\un{B})$ for every matrix $F$ of non-commutative polynomials in $m$ variables (see Theorem 1.1 of~\cite{DerksenKlepMakamVolcic_2023}).

\subsection{Pair of matrices}\label{section_pairs}

 In~\cite{Belitskii_1983, Belitskii_2000} Belitskii established the algorithm which associates to every pair $\un{A}$ of $M_n^2$ the unique canonical pair $\un{A}_{\can}$ such that
\begin{enumerate}
\item[$\bullet$] $\un{A}_{\can}$ lies in the $\GL_n$-orbit of $\un{A}$;

\item[$\bullet$] a pair $\un{B}\in M_n^2$ belongs to the $\GL_n$-orbit of $\un{A}$ if and only if $\un{A}_{\can}=\un{B}_{\can}$.
\end{enumerate} 
But as it was mentioned in~\cite{Sergeichuk_2011}, without restrictions on the size of matrices, we cannot expect to have explicit descriptions of Belitskii’s canonical matrix pairs. Note that in~\cite{Sergeichuk_2000} Belitskii’s algorithm was expanded to the classification problem of arbitrary systems of linear mappings.

Denote by $D_n$ the subset of $M_n^2$ consisting of all pairs $\un{A}=(A_1,A_2)$ such that the spectrum of $A_1$ is simple, i.e.,  $A_1$ has $n$ distinct eigenvalues in $\FF$. In particular, we assume that the following convention holds:

\begin{conv}\label{conv} There are at least $n$ pairwise different elements in $\FF$. 
\end{conv}

\noindent{}Obviously, $D_n$ is a $\GL_n$-submodule of $M_n^2$. Futorny, Horn, Sergeichuk in Theorem 6.1 of~\cite{Sergeichuk_2011} described all canonical pairs for $D_n$. Canonical pairs are parameterized by 
\begin{enumerate}
\item[$\bullet$] directed graphs $G$ without undirected cycles, which we call the {\it type} of the canonical pair;

\item[$\bullet$] finite number of constants of $\FF$, which can take arbitrary values of $\FF$; we refer to these constants as the {\it free parameters} for the type $G$.
\end{enumerate}
More details can be found in Section~\ref{section_space}. Note that $\un{A}\in D_n$ is indecomposable if and only if the type of $\un{A}$ is a tree (see Section 6 of~\cite{Sergeichuk_2011}).

\subsection{Polynomial matrix invariants}\label{section_matr_inv}

The coordinate algebra of the vector space $M_n^m$ is the polynomial $\FF$-algebra $\calcP{n,m}=\FF[M_n^m]=\FF[x_{ij}(k)\,|\,1\leq i,j\leq n,\; 1\leq k\leq m]$. To define the action of $\GL_n$ we consider $\calcP{n,m}$ as the symmetric algebra $S((M_n^m)^{\ast})$ over the dual space $(M_n^m)^{\ast}$, with a basis $\{x_{ij}(k)\}$ of $(M_n^m)^\ast$, where $x_{ij}(k):M_n^m\to\FF$ is defined by $\un{A}\to (A_k)_{ij}$.  Here $A_{ij}$ stands for the $(i,j)^{\rm th}$ entry of a matrix $A$.  Thus every element of $\calcP{n,m}$ defines a polynomial functions $M_n^m \to \FF$. The algebra $\calcP{n,m}$ becomes a $\GL_n$-module with
$(g \cdot f)(\un{A}) = f(g^{-1}\cdot \un{A})$ for all $g\in\GL_n$, $f \in (M_n^m)^\ast$ and $\un{A} \in M_n^m$.
The algebra of {\it polynomial matrix invariants} is
$$\calcP{n,m}^{\GL_n}=\{f\in \calcP{n,m} \mid g\cdot f=f \text{ for all }g\in \GL_n\}.$$ 
Note that $\calcP{n,m}^{\GL_n}$ lies in the algebra of all $f\in \calcP{n,m}$ with $f(g\cdot\un{A})=f(\un{A})$ for all $g\in\GL_n$, $\un{A}\in M_n^m$, where we have equality of these two algebras in the case of an infinite field $\FF$. 

To define some polynomial matrix invariants consider the {\it generic $n\times n$ matrices}
$$X_k= (x_{ij}(k))_{1\leq i,j\leq n}$$%
for $1\leq k\leq m$. Denote by $\algX{m}$ the associative $\FF$-algebra generated by the generic matrices $X_1,\ldots,X_m$ together with the identity matrix $I_n$.  Any product of the generic matrices is called a word of $\algX{m}$. The unit $I_n\in \algX{m}$ is called the empty word. Since $x_{ij}(k):M_n^m\to\FF$ is a map, for every $F\in \algX{m}$ and $\un{A}\in M_n^m$ we define $F(\un{A})\in M_n$.  We define the  {\it degree} of $F\in\algX{m}$ as the minimal degree of a non-commutative polynomial $f$ in letters $x_1,\ldots,x_m$ such that $F=f(X_1,\ldots,X_m)$. Equivalently, the  degree of $F\in\algX{m}$ is the maximum of degrees of $F_{ij}$ for $1\leq i,j\leq n$. Given $1\leq t\leq n$, we write $\si_t$ for the $t^{\rm th}$ coefficient of the characteristic polynomial of an $n\times n$ matrix, i.e., $\si_1=\tr$ is the trace and $\si_n=\det$ is the determinant. 


We define the $\GL_n$-action on $M_n(\calcP{n,m})$ as follows: for $g\in \GL_n$ and $F\in M_n(\calcP{n,m})$ we set that $g\cdot F$ is the matrix with $(i,j)^{\rm th}$ entry equal to $g\cdot F_{ij}$. Then the action of $\GL_n$ on $\calcP{n,m}$ is explicitly described by the next formula, which can be easily obtained:
$$g\cdot X_k=g^{-1} X_k g$$
for all $g\in\GL_n$ and $1\leq k\leq n$. Note that for every $F,H$ from $\algX{m} \subset M_n(\calcP{n,m})$, $g\in\GL_n$ and $1\leq t\leq n$ we have
\begin{eq}\label{eq_algZ}
\si_t(g\cdot F)=\si_t(F)\;\text{ and }\;  g\cdot (FH)=(g\cdot F) (g\cdot H).
\end{eq}%

The generators for the algebra $\calcP{n,m}^{\GL_n}$ of polynomial matrix invariants were described by Sibirskii~\cite{Sibirskii68} and Procesi~\cite{Procesi76} in case $\Char\FF=0$ and by Donkin~\cite{Donkin92a} in case $\Char\FF>0$. Namely, the algebra $\calcP{n,m}^{\GL_n}$ is generated by $\si_t(U)$, where $U$ is a product of generic matrices and $1\leq t\leq n$. A minimal generating sets for $\calcP{2,m}^{\GL_n}$ and $\calcP{3,m}^{\GL_n}$ were obtained in~\cite{Procesi84, DKZ02, Lopatin_Sib, Lopatin_Comm1, Lopatin_Comm2} and for $\calcP{4,2}^{\GL_n}$, $\calcP{5,2}^{\GL_n}$ were obtained in~\cite{Teranishi86, Drensky_Sadikova_4x4, Djokovic07, Djokovic09} in case $\Char\FF=0$. Note that the algebra $\calcP{n,m}^{\GL_n}$ can be generalized to the algebra of invariants of representations of quivers with respect to the action of the classical linear groups $\GL_n$, $\SL_n$, ${\rm O}_n$, $\SO_n$, $\Sp_n$ (see~\cite{Lopatin_2009JA} the references and more details).

\subsection{Abstract matrix invariants}\label{section_abstr_inv}

Denote by $\F{n,m}$ the $\FF$-algebra of all functions $M_n^m\to \FF$ with respect to the pointwise addition and multiplication. The algebra of {\it abstract matrix invariant functions} (for short, {\it abstract matrix invariants}) is
$$\F{n,m}^{\GL_n}=\{f\in \F{n,m}\,|\,f(g\cdot\un{A})=f(\un{A}) \text{ for all }g\in\GL_n,\; \un{A}\in M_n^m\}.$$
Obviously, $\calcP{n,m}^{\GL_n}$ lies in $\F{n,m}^{\GL_n}$.  The composition of generic matrices and an abstract invariant is again an abstract invariants. Namely,

\begin{remark}\label{remark_abs_inv2}
If $h\in \F{n,m}^{\GL_n}$ an abstract invariant and $F_1,\ldots,F_d\in\algX{m}$, then the map $f=h(F_1,\ldots,F_d)\in \F{n,m}$ given by $f(\un{A})=h(F_1(\un{A}),\ldots,F_d(\un{A}))$, where $\un{A}\in M_n^m$, is an abstract invariant from $\F{n,m}^{\GL_n}$.  
\end{remark}

Let us construct some abstract invariants. Define the  function $\zeta$ of $\F{n,1}$ as follows:
$$
\zeta(A)= \left\{
\begin{array}{rl}
1, & \text{if } A=0 \\
0, & \text{if } A\neq0 \\
\end{array}
\right..
$$

\begin{remark}\label{remark_rank}
The rank of a matrix is a map $\rank:M_n\to \ZZ$, but in general $\rank\not\in\F{n,1}$. Nevertheless, for every $0\leq t\leq n$ we define $\rank_t$ of $\F{n,1}$  by
$$
\rank\nolimits_t(A)= \left\{
\begin{array}{rl}
1, & \text{if } \rank(A)=t \\
0, & \text{otherwise } \\
\end{array}
\right..
$$
Obviously, $\rank(A)=\rank(B)$  if and only if $\rank_t(A)=\rank_t(B)$ for all $0\leq t\leq n$. Note that $\rank_0(X_1)=\zeta(X_1)$.
\end{remark}

\begin{remark}\label{remark_abs_inv}
For every $F\in\algX{m}$ and $0\leq t\leq n$ we have that  $\zeta(F)$ and $\rank_t(F)$ belong to  $\F{n,m}^{\GL_n}$.  If $\FF$ is infinite, then  $\zeta(X_1)$ does not lie in $\calcP{n,m}^{\GL_n}$, since any non-zero polynomial in one variable do not have infinitely many roots.
\end{remark}

\begin{defin} Given an abstract invariant $f\in \F{n,m}^{\GL_n}$, the {\it width} of $f$ is the smallest $d>0$ such that there exists an abstract invariant $h\in \F{n,d}^{\GL_n}$ and $F_1,\ldots,F_d\in\algX{m}$ such that 
$$f = h(F_1,\ldots,F_d).$$
\end{defin}

Note that $\si_t(F)$ for $1\leq t\leq n$ as well as $\rank_t(F)$ for $0\leq t\leq n$  have width one and the width of any $f\in \F{n,m}^{\GL_n}$ is less or equal to $m$. 

We say that a set $S\subset \F{n,m}^{\GL_n}$ of abstract invariants {\it separates $\GL_n$-orbits} on $M_n^m$ if for every $\un{A},\un{B}\in M_n^m$ the condition $f(\un{A})=f(\un{B})$ for all $f\in S$ implies that $\GL_n\cdot\un{A}=\GL_n\cdot\un{B}$. Given a $\GL_n$-invariant subset $L$ of  $M_n^m$, we similarly define when a set $S\subset\F{n,m}^{\GL_n}$ or a set $S\subset \{\rank(F)\,|\,F\in\algX{m}\}$ separates $\GL_n$-orbits on $L$.   Separating polynomial invariants were introduced by Derksen and Kemper in~\cite{DerksenKemper_book} (see~\cite{DerksenKemper_bookII} for the second edition) and then were studied in~\cite{Cavalcante_Lopatin_1, DM5, Domokos_2017, Domokos20, Domokos20Add,  Dufresne_Elmer_Sezer_14,   kaygorodov2018, Kemper_Lopatin_Reimers_2022, Kohls_Sezer_2013, Lopatin_Reimers_1}.  Note that separating polynomial invariants do not separate orbits with non-empty intersection of closures in Zarisky topology.

\begin{example}\label{ex1}
Assume that $\un{A},\un{B}\in D_n$ and eigenvalues of $A_1$ are not the same as eigenvalues of $B_1$. Then $\un{A}$, $\un{B}$ are separated by $\si_t(X_1)$, where $1\leq t\leq n$. 
\end{example}


\subsection{Results}\label{section_results}

In Theorem~\ref{theo_main2} we proved the two-sided Curto--Herrero conjecture for the space $D_n$ of pairs of matrices, where the first matrix has simple spectrum.  As a consequence, in Corollary~\ref{cor_theo_main2} we showed that $\GL_n$-orbits on $D_n$ are separated by abstract invariants of width one.  Moreover, we provided some upper bound on degrees of non-commutative polynomials which should be considered. We also proved that the types of elements of $D_n$ are separated by the smaller set $\Set_{\si}\cup \Set_{\zeta}$ (see Theorem~\ref{theo_main1} for the definitions). On the other hand, in Lemma~\ref{lemma_ex1} we proved that $\GL_n$-orbits on $M_n^m$ are not separated by abstract invariants of width one in case $n\geq 3$ and $m\geq2$. Lemma~\ref{lemma_ex} implies that $\Set_{\si}\cup \Set_{\zeta}$ does not separate $\GL_n$-orbits on $D_n$ for $n\geq4$.

\section{The space of canonical pairs}\label{section_space}


A $\ast$-matrix is an $n\times n$ matrix $\calcA$ such that  $(i,j)^{th}$ entry $\calcA_{ij}$ of $\calcA$ is one of the following symbols: $0,1,\ast$ ($1\leq i,j\leq n$). Equivalently, a $\ast$-matrix $\calcA$ can be considered as the set of all matrices $A\in M_n$ with 
$$A_{ij}=\left\{
\begin{array}{cc}
0,& \text{ if } \calcA_{ij}=0 \\
1,& \text{ if } \calcA_{ij}=1 \\
\end{array}
\right.$$
for all $1\leq i,j\leq n$. In case $\calcA_{ij}=\ast$ we say that $A_{ij}\in\FF$ is a {\it free parameter} for $\calcA$.

By graph we always mean directed finite graph with numbered vertices $v_1,\ldots,v_n$. As an example, one-arrow graphs $v_1\to v_2$ and $v_2\to v_1$ are different. The set of vertices of a graph $G$ is denoted by $\ver(G)$ and the set of arrows is denoted by $\arr(G)$. A graph without undirected cycles is called a {\it forest}. If any two vertices of a forest $G$ are connected by an undirected path, then $G$ is called a tree. Introduce the following lexicographical order on the set of all pairs $P_n=\{(i,j) \,|\, 1\leq i,j\leq n\}$: $(i,j)<(r,s)$ if $i<r$ or $i=r$, $j<s$.

A $\ast$-matrix $\calcA$ is {\it canonical} if there exists a forest $G=G_{\!\calcA}$ with vertices $v_1,\ldots,v_n$ such that for every $1\leq i\neq j\leq n$ we have 
\begin{enumerate}
\item[(i)] $\calcA_{ij}=1$ if $G$ has the arrow $v_i\to v_j$; 

\item[(ii)] $\calcA_{ij}=0$ if one of the following conditions holds: 
\begin{enumerate}
\item[(a)] $G$ has no undirected path from $v_i$ to $v_j$;

\item[(b)] the exists (a unique) undirected path $a$ from $v_i$ to $v_j$ and $a$ contains an arrow $v_r\to v_s$ such that $(r,s)> (i,j)$;
\end{enumerate}

\item[(iii)] $\calcA_{ij}=\ast$, if there exists (a unique) undirected path $a$ from $v_i$ to $v_j$ and for every arrow $v_r\to v_s$ of $a$ we have $(r,s) < (i,j)$;

\item[(iv)] $\calcA_{ii}=\ast$.
\end{enumerate}%
\noindent{}Note that $\calcA \to G_{\!\calcA}$ is the 1-1 correspondence between  $n\times n$ canonical  $\ast$-matrices and forests with $n$ vertices.

The following lemma is a consequence of Theorem 6.1 of~\cite{Sergeichuk_2011}. For the sake of completeness we present a straightforward proof of the lemma.

\begin{lemma}\label{lemma1}
If $\calcA$ and $\calcB$ are $n\times n$ canonical  $\ast$-matrices, then $\calcA \cap \calcB = \varnothing$ in case $\calcA\neq\calcB$.
\end{lemma}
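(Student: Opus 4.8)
The plan is to show that if two canonical $\ast$-matrices $\calcA$ and $\calcB$ share a common matrix $A\in\calcA\cap\calcB$, then the underlying forests $G_{\!\calcA}$ and $G_{\!\calcB}$ must coincide, and hence $\calcA=\calcB$ by the $1$-$1$ correspondence noted just before the statement. The key observation is that the off-diagonal pattern of $0$'s, $1$'s, and $\ast$'s in a canonical $\ast$-matrix is determined combinatorially by the forest, but a \emph{specific} matrix $A$ in the set does not a priori remember which entries were ``$1$'' and which ``free parameter equal to $1$'' — so the argument has to reconstruct the arrow set $\arr(G_{\!\calcA})$ from $A$ together with the canonicity rules, not from $A$ alone.

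First I would argue by contradiction: suppose $\calcA\neq\calcB$ but there is $A\in\calcA\cap\calcB$. Since $\calcA\to G_{\!\calcA}$ is a bijection, $G_{\!\calcA}\neq G_{\!\calcB}$, so there is a pair $(i,j)$ with $i\neq j$ lying (without loss of generality) in $\arr(G_{\!\calcA})\setminus\arr(G_{\!\calcB})$ — or more carefully, a pair where the two $\ast$-matrices differ in a \emph{forced} entry ($0$ or $1$). I would pick such a differing pair $(i,j)$ that is \emph{minimal} with respect to the lexicographic order on $P_n$. The point of minimality is this: if $\calcA_{ij}=1$ (an arrow of $G_{\!\calcA}$) while $\calcB_{ij}\neq 1$, then $A_{ij}=1$, so by the canonicity rules for $\calcB$ we must have $\calcB_{ij}=\ast$ (it cannot be $0$ since $A_{ij}=1\neq 0$); rule (iii) for $\calcB$ then says there is a unique undirected path $b$ from $v_i$ to $v_j$ in $G_{\!\calcB}$ all of whose arrows $v_r\to v_s$ satisfy $(r,s)<(i,j)$. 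Each such arrow $(r,s)$ is a pair strictly below $(i,j)$ in the order, so by minimality $\calcA$ and $\calcB$ agree on all forced entries $\leq(r,s)$; in particular every arrow of the path $b$ is also an arrow of $G_{\!\calcA}$ (its entry is $1$ in $\calcB$, hence $1$ in $\calcA$ by agreement below $(i,j)$). But then $G_{\!\calcA}$ contains both the arrow $v_i\to v_j$ \emph{and} the path $b$ from $v_i$ to $v_j$ using other arrows; since $G_{\!\calcA}$ is a forest, the unique undirected path from $v_i$ to $v_j$ is the single arrow $v_i\to v_j$, forcing $b$ to be that arrow, i.e. $v_i\to v_j\in\arr(G_{\!\calcB})$, contradicting $\calcB_{ij}\neq 1$. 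The symmetric case (a forced $0$ in $\calcA$ versus something else in $\calcB$) is handled the same way: a forced $0$ at $(i,j)$ in $\calcA$ comes from rule (ii)(a) or (ii)(b); in case (a) there is no undirected path from $v_i$ to $v_j$ in $G_{\!\calcA}$, while in case (b) the unique path contains an arrow above $(i,j)$ — in either situation one derives a contradiction with the structure forced on $G_{\!\calcB}$ by $A_{ij}$ and the already-established agreement below $(i,j)$.

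The diagonal entries cause no trouble, since rule (iv) makes every diagonal entry a free parameter $\ast$ regardless of the forest, so $\calcA$ and $\calcB$ never differ there. The main obstacle is bookkeeping the case analysis cleanly: one must be careful that a ``difference'' between $\calcA$ and $\calcB$ at $(i,j)$ relevant to the argument is a difference in a \emph{forced} entry (one matrix has $0$ or $1$ there, determined by the forest), because a difference of type ``$\ast$ vs. $\ast$'' is vacuous, and a difference ``$1$ vs. $\ast$'' or ``$0$ vs. $\ast$'' is exactly what the minimality-plus-forest-acyclicity argument resolves, as just sketched. Once every such minimal discrepancy is ruled out, $G_{\!\calcA}$ and $G_{\!\calcB}$ have the same arrow set, hence are equal as graphs on the numbered vertices $v_1,\dots,v_n$, and therefore $\calcA=\calcB$, contradicting our assumption. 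This contradiction establishes $\calcA\cap\calcB=\varnothing$ whenever $\calcA\neq\calcB$.
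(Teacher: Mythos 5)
Your main argument matches the paper's proof: pick the minimal (lexicographic) pair where the arrow sets differ, assume WLOG $v_i\to v_j\in\arr(G_{\!\calcA})\setminus\arr(G_{\!\calcB})$, deduce from $A_{ij}=1$ that $\calcB_{ij}=\ast$, obtain via rule~(iii) a path $b$ in $G_{\calcB}$ with all arrows below $(i,j)$, transfer $b$ into $G_{\!\calcA}$ by minimality, and derive a contradiction with $G_{\!\calcA}$ being a forest. Your phrasing (``$b$ must be the single arrow, forcing $v_i\to v_j\in\arr(G_{\calcB})$'') and the paper's (``$G_{\!\calcA}$ is not a forest'') are just two ways of reading the same clash between the path $b$ and the arrow $v_i\to v_j$.

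Where your write-up is shaky is the setup and the ``symmetric case'' paragraph. By phrasing the minimal pair as the minimal place where the $\ast$-matrices ``differ in a forced entry'', you invite a case in which $\calcA_{ij}=0$ and $\calcB_{ij}=\ast$ (neither graph has the arrow $v_i\to v_j$). Here $A_{ij}=0$, and membership $A\in\calcB$ with $\calcB_{ij}=\ast$ places no constraint whatsoever on $A_{ij}$, so your claim to ``derive a contradiction with the structure forced on $G_{\calcB}$ by $A_{ij}$'' is vacuous — this is \emph{not} symmetric to the forced-$1$ case and does not use $A$ at all. To dispose of it one must argue purely combinatorially that such a pair cannot be minimal: with the arrow sets agreeing below $(i,j)$, the path $b$ in $G_{\calcB}$ supplied by $\calcB_{ij}=\ast$ is also a path in $G_{\!\calcA}$ whose arrows all lie below $(i,j)$, which by rule~(iii) forces $\calcA_{ij}=\ast$, contradiction. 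The paper sidesteps this entirely by picking the minimal pair where the \emph{arrow sets} differ, which automatically makes the minimal pair one where some graph has the arrow (a forced $1$). Adopting that cleaner choice of minimal pair removes the loose end in your proof without changing its substance.
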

\begin{proof} Assume that $A\in \calcA \cap \calcB$ and  $\calcA\neq\calcB$. Vertices of $G_{\!\calcA}$ and $G_{\calcB}$ we denote by $v_1,\ldots,v_n$. Modulo permutation of $\calcA$ and $\calcB$, without loss of generality we assume can that there exists the pair $(i,j)\in P_n$ such that 
\begin{enumerate}
\item[$\bullet$] $v_i\to v_j$ is an arrow of $G_{\!\calcA}$ and is not an arrow of $G_{\calcB}$;

\item[$\bullet$] for every $(r,s)\in P_n$ with $(r,s)<(i,j)$ we have that $v_r\to v_s$ is an arrow of $G_{\!\calcA}$ if and only if  $v_r\to v_s$ is an arrow of  $G_{\calcB}$.
\end{enumerate}

Since $\calcA_{ij}=1$, we have $A_{ij}=1$ and $\calcB_{ij}=\ast$. Then there exists a unique undirected path $b$ in $G_{\calcB}$ from $v_i$ to $v_j$ such that for every arrow $v_r\to v_s$ of $b$ we have $(r,s)<(i,j)$; in particular, $v_r\to v_s$ lies in $G_{\!\calcA}$. Therefore, $b$ is also an undirected path in $G_{\!\calcA}$. Since $v_i\to v_j$ is not an arrow of $b$, we obtain that $G_{\!\calcA}$ is not a forest; a contradiction.
\end{proof}

A pair $\un{A}$ of $D_n$ is called {\it canonical}, if $A_1=\diag(a_1,\ldots,a_n)$ is a diagonal matrix with $a_1< \cdots <a_n$ and $A_2\in \calcA$ for some canonical $\ast$-matrix $\calcA$. In this case the graph $G_{\!\calcA}$ is called the {\it type} of $\un{A}$. The following statement is Theorem 6.1 of~\cite{Sergeichuk_2011}, which was proven over $\CC$ but the proof remains valid over an arbitrary field $\FF$.

\begin{theo}\label{theoFutorny}
The set of all canonical pairs of $D_n$ is in 1-1 correspondence with $\GL_n$-orbits on $D_n$. 
\end{theo}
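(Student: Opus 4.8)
The plan is to reduce the $\GL_n$-action on $D_n$ to the action of the diagonal torus, then to prove existence of a canonical representative by a lexicographic normalization procedure (Belitskii's algorithm specialized to this setting) and uniqueness by a lexicographic induction on the arrows, using the forest combinatorics (i)--(iv) of Section~\ref{section_space}.

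First I would carry out the reduction. Given $\un A=(A_1,A_2)\in D_n$, the matrix $A_1$ has $n$ distinct eigenvalues $a_1,\dots,a_n\in\FF$, hence is diagonalizable over $\FF$; conjugating and permuting the basis we may assume $A_1=\diag(a_1,\dots,a_n)$ with $a_1<\cdots<a_n$. A matrix commuting with a diagonal matrix whose entries are pairwise distinct is itself diagonal, so the stabilizer of $A_1$ in $\GL_n$ is $T=\{\diag(t_1,\dots,t_n)\mid t_1,\dots,t_n\in\FF\setminus\{0\}\}$, which acts on the second component by $(t\cdot M)_{ij}=(t_i/t_j)M_{ij}$. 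Therefore $\GL_n\cdot\un A$ contains a canonical pair if and only if the $T$-orbit of $A_2$ meets some canonical $\ast$-matrix, and two canonical pairs lie in one $\GL_n$-orbit if and only if their first components agree (comparing spectra forces $a_i=b_i$) and their second components lie in a single $T$-orbit. Note also that $T$ fixes every diagonal entry and preserves the off-diagonal support of any matrix.

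For existence I would construct a forest $G$ and an element $t\in T$ greedily, scanning the off-diagonal positions $(i,j)$ in lexicographic order while maintaining two invariants: the arrows chosen so far coincide with the arrows of the eventual $G$ at positions $<(i,j)$, and every already-visited entry has reached its final value. Processing $(i,j)$: if the current partial forest already has an undirected path from $v_i$ to $v_j$, add no arrow and leave the entry alone (it is a free parameter, matching (iii)); if not and the current entry is $0$, add no arrow (matching (ii)); if not and the current entry is nonzero, add the arrow $v_i\to v_j$ (no undirected cycle is created) and rescale by one common scalar on the connected component of $v_j$ to turn the entry into $1$. The key point is that such a component-wise rescaling alters an entry $(r,s)$ only when $v_r$ and $v_s$ lie on opposite sides of that component, whereas every already-frozen entry either sits on an arrow, or on a free-parameter position joined by a path — in both cases its two indices lie in one component — or is zero; hence no frozen entry is disturbed and the invariants persist. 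A short check then confirms that the path conditions seen at the moment position $(i,j)$ is visited are exactly conditions (i)--(iv) for the final forest $G$, so the normalized matrix lies in the canonical $\ast$-matrix determined by $G$.

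For uniqueness, suppose $M\in\calcA$ and $tMt^{-1}\in\calcB$ for canonical $\ast$-matrices $\calcA,\calcB$ with forests $G,H$. I would prove by induction on $(i,j)$ in lexicographic order that $G$ has the arrow $v_i\to v_j$ if and only if $H$ does — whence the $\ast$- and $0$-positions at levels $<(i,j)$ also agree. At step $(i,j)$: if the common partial forest already joins $v_i$ to $v_j$, then $(i,j)$ is a free parameter in both matrices and neither $G$ nor $H$ has the arrow $v_i\to v_j$; otherwise $\calcA_{ij},\calcB_{ij}\in\{0,1\}$, and comparing $(tMt^{-1})_{ij}=(t_i/t_j)M_{ij}$ with its value in $\calcB$ forces $\calcA_{ij}=\calcB_{ij}$ (if $M_{ij}=0$ the image is $0$; if $M_{ij}=1$ the image is $t_i/t_j\neq 0$, hence equals $1$ and $t_i=t_j$). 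Thus $G=H$, so $\calcA=\calcB$; moreover $t_i=t_j$ along every arrow, so $t$ is constant on each connected component of $G$, and since the two indices of any off-diagonal free parameter lie in one component, $t$ acts trivially on $M$, i.e.\ $tMt^{-1}=M$. Together with the reduction this yields the asserted bijection. I expect the main obstacle to be the existence half — precisely, verifying that the greedy rescalings never perturb a previously fixed entry and that the local path conditions encountered during the scan match the global forest conditions (i)--(iv); the uniqueness half is then a clean induction, essentially the argument behind Lemma~\ref{lemma1}.
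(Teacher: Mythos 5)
Your argument is correct. The paper itself gives no proof of this statement---it cites Theorem 6.1 of Futorny--Horn--Sergeichuk \cite{Sergeichuk_2011} and only records in the appendix the key centralizer fact (a matrix commuting with a diagonal matrix with distinct diagonal entries is itself diagonal)---but your reduction to the diagonal torus followed by a lexicographic Belitskii-style normalization is exactly the intended argument, and your uniqueness induction is the torus-equivariant refinement of the argument behind Lemma~\ref{lemma1}. One small precision worth making explicit: when you add the arrow $v_i\to v_j$ and rescale to normalize the $(i,j)$ entry to $1$, the common scalar must be applied to the connected component of $v_j$ as it was \emph{before} the new arrow merges it with the component of $v_i$ (equivalently, to the old component of $v_j$, which does not contain $v_i$); if one rescales the merged component, $t_i/t_j$ is unchanged and the entry cannot be normalized.
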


\begin{example}\label{ex_theoFutorny}
Assume $n=2$. Then the set of all canonical $\ast$-matrices $\calcA$ and the corresponding graphs $G_{\!\calcA}$ is given below:
\begin{enumerate}
\item[$\bullet$] $\calcA=\matr{\ast}{0}{0}{\ast}\;$ with  $\;G_{\!\calcA}:v_1\qquad v_2$, i.e., there is no arrows in $G_{\!\calcA}$;

\item[$\bullet$] $\calcA=\matr{\ast}{1}{\ast}{\ast}\;$ with  $\;G_{\!\calcA}:v_1\longrightarrow v_2$;

\item[$\bullet$] $\calcA=\matr{\ast}{0}{1}{\ast}\;$ with  $\;G_{\!\calcA}:v_1 \longleftarrow v_2$.
\end{enumerate}
\end{example}

\begin{example}\label{ex2_theoFutorny}
For $n=4$ we present some canonical $\ast$-matrices $\calcA$ and the corresponding graphs $G_{\!\calcA}$:
\begin{enumerate}
\item[$\bullet$] $\calcA = \left(
\begin{array}{cccc}
\ast & 1 & 0 & 0 \\
\ast & \ast & 1 & 0 \\
\ast & \ast & \ast & 1 \\
\ast & \ast & \ast & \ast \\
\end{array}
\right)\;$ with $\;G_{\!\calcA}: v_1 \longrightarrow v_2 \longrightarrow v_3 \longrightarrow v_4$; 

\item[$\bullet$] $\calcA = \left(
\begin{array}{cccc}
\ast & 0 & 0 & 0 \\
   1 & \ast & 0 & 0 \\
0 & 1 & \ast & 0 \\
0 & 0 & 1 & \ast \\
\end{array}
\right)\;$ with $\;G_{\!\calcA}: v_1 \longleftarrow v_2 \longleftarrow v_3 \longleftarrow v_4$. 
\end{enumerate}
\end{example}

%
%

\section{Combinatorics of matrices}\label{section_combinatorics}

Denote by $E_{ij}$ the $n\times n$ matrix such that its the only non-zero entry is $1$ in the position $(i,j)$. 
A word $w$ in letters $x_1,\ldots,x_k$ is called multilinear if letter $x_i$ appears in $w$ at most one time for all $1\leq i\leq k$.  Given a sequence $S=(A_1,\ldots,A_k)$ of matrices and a word $w$ in letters $x_1,\ldots,x_k$, we denote by 
\begin{enumerate}
\item[$\bullet$] $w(S)$ the result of substitutions $x_1\to A_1,\ldots,x_k\to A_k$ in $w$; 

\item[$\bullet$] $\Alt(S)=A_1-A_2+A_3-A_4+ \ldots + (-1)^{k+1}A_k$ the alternating sum. 
\end{enumerate}

Given a matrix $A$ and a symbol $\de\in\{1,\mathsf{T}\}$, we denote
$$A^{\de}=\left\{
\begin{array}{ccl}
A &, & \text{if }\de=1 \\
A^{\mathsf{T}}&, &\text{if } \de=\mathsf{T} \\
\end{array}
\right..
$$

Given  $\un{i}=(i_1,\ldots,i_{k+1})$ with pairwise different elements $1\leq i_1,\ldots,i_{k+1}\leq n$, $k\geq 1$, and $\un{\de}\in\{1,\mathsf{T}\}^k$, the sequence of matrices $\TD_{\un{i},\un{\de}}=(E_{i_1i_2}^{\de_1},E_{i_2i_3}^{\de_2},\ldots,E_{i_{k}i_{k+1}}^{\de_k})$ is called a {\it three-diagonal sequence} of length $k$. Note that in case $\un{i}=(1,\ldots,k+1)$ all elements of $\TD_{\un{i},\un{\de}}$ are three-diagonal matrices. 

A three-diagonal sequence $\TD_{\un{i},\un{\de}}$ is called a {\it staircase} sequence with {\it foundation} $E_{i_1i_{k+1}}$ if $k\geq3$ is odd, $\de_1=\de_3=\cdots=\de_k=1$  and $\de_2=\de_4=\cdots=\de_{k-1}=\mathsf{T}$. 

\begin{example} {\bf (a)} $S=(E_{12}, E_{23}^{\mathsf{T}}, E_{34})$ is a staircase sequence with foundation $C=E_{14}$ for $k=3$ and $\un{i}=(1,2,3,4)$. In case $n=4$ and $\al\in\FF$ we have that 
$$\Alt(S) + \al C =
\left(\begin{array}{cccc}
0 & 1 & 0 & \Circled{\al} \\
0 & 0 & 0 & 0 \\
0 & -1 & 0 & 1 \\
0 & 0 & 0 & 0 \\
\end{array}
\right),$$
where the coefficient of the foundation is encircled. 

\medskip
\noindent{{\bf (b)} } $S=(E_{12}, E_{23}^{\mathsf{T}}, E_{34}, E_{45}^{\mathsf{T}}, E_{56})$ is a staircase sequence with foundation $C=E_{16}$ for $k=5$ and $\un{i}=(1,2,3,4,5,6)$. In case $n=6$ and $\al\in\FF$ we have that 
$$\Alt(S) +\al C  = 
\left(\begin{array}{cccccc}
0 & 1 & 0 & 0 & 0 & \Circled{\al}\\
0 & 0 & 0 & 0  & 0 & 0\\
0 & -1 & 0 & 1 & 0 & 0\\
0 &   0   & 0 & 0     & 0 & 0\\
0 &   0   & 0 & -1 & 0 & 1\\
0 &   0   & 0 & 0     & 0 & 0\\
\end{array}
\right),$$
where the coefficient of the foundation is encircled. 
\end{example}

\begin{lemma}\label{lemma_TD_T}
Assume that $S=\TD_{\un{i},\un{\de}}$ is a three-diagonal sequence of length $k\geq1$ with $\de_1=\cdots=\de_k=\mathsf{T}$. Then there are words $u_1,u_2$ in letters $x_1,\ldots,x_k$ such that 
$$u_1(S) E_{i_1i_{k+1}} u_2(S) = E_{i_ri_{r+1}}^{\de_r}$$ 
for $r=1$; moreover, 
$$
\deg(u_1) + \deg(u_2)= k+1.
$$
\end{lemma}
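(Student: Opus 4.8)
The plan is to exhibit the two words explicitly; the verification is then a one-line computation with matrix units, so I do not expect any genuine obstacle. Since $\de_1=\cdots=\de_k=\mathsf{T}$, the three-diagonal sequence is $S=(A_1,\ldots,A_k)$ with $A_j=E_{i_ji_{j+1}}^{\mathsf{T}}=E_{i_{j+1}i_j}$ for $1\leq j\leq k$. First I would record the only tool needed, namely that $E_{ab}E_{cd}$ equals $E_{ad}$ if $b=c$ and is $0$ otherwise, and note that because $i_1,\ldots,i_{k+1}$ are pairwise distinct the ``descending'' product telescopes with no cancellation:
$$A_kA_{k-1}\cdots A_1=E_{i_{k+1}i_k}E_{i_ki_{k-1}}\cdots E_{i_2i_1}=E_{i_{k+1}i_1}.$$

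Next I would take $u_1=x_1$ and $u_2=x_kx_{k-1}\cdots x_1$ (which is simply $u_2=x_1$ when $k=1$). Then $u_1(S)=A_1=E_{i_2i_1}$ and, by the display above, $u_2(S)=E_{i_{k+1}i_1}$, so that
$$u_1(S)\,E_{i_1i_{k+1}}\,u_2(S)=E_{i_2i_1}\,E_{i_1i_{k+1}}\,E_{i_{k+1}i_1}=E_{i_2i_{k+1}}\,E_{i_{k+1}i_1}=E_{i_2i_1}=E_{i_1i_2}^{\mathsf{T}}=E_{i_1i_2}^{\de_1},$$
which is exactly $E_{i_ri_{r+1}}^{\de_r}$ for $r=1$. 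Since the degree of a word coincides with its length, $\deg(u_1)+\deg(u_2)=1+k=k+1$, as required.

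The construction is essentially forced: the ``foundation'' $E_{i_1i_{k+1}}$ must bridge $u_1(S)$, which is constrained to be $E_{i_2i_1}$, with $u_2(S)$, which is constrained to be $E_{i_{k+1}i_1}$, and the prescribed degree $k+1$ leaves just enough room for $u_1$ of length $1$ and $u_2$ of length $k$. The only points that need a little care are the degenerate case $k=1$, where the middle identity collapses to $E_{i_2i_1}E_{i_1i_2}E_{i_2i_1}=E_{i_2i_1}$, and keeping the transposes straight, i.e.\ remembering that $A_j=E_{i_{j+1}i_j}$ rather than $E_{i_ji_{j+1}}$; neither causes a real difficulty.
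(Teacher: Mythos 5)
Your proof is correct and takes essentially the same approach as the paper: you choose the same words $u_1=x_1$ and $u_2=x_kx_{k-1}\cdots x_1$ and verify the identity and the degree count by the same matrix-unit computation, just spelled out in slightly more detail (including the telescoping of $u_2(S)$ and the degenerate case $k=1$).
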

\begin{proof}
For $u_1=x_1$ and $u_2=x_k x_{k-1}\cdots x_{1}$ we have that
$$
u_1(S) \,E_{i_1i_{k+1}} \, u_2(S) = E_{i_1i_2}^{\mathsf{T}}\cdot E_{i_1i_{k+1}}\cdot E_{i_{k}i_{k+1}}^{\mathsf{T} }E_{i_{k-1}i_{k}}^{\mathsf{T}} E_{i_{k}i_{k+1}}^{\mathsf{T}}\cdots E_{i_{1}i_{2}}^{\mathsf{T}} = E_{i_1i_{2}}^{\mathsf{T}}.
$$
\noindent{}The required is proven.
\end{proof}

\begin{lemma}\label{lemma_TD}
Assume that $S=\TD_{\un{i},\un{\de}}$ is a three-diagonal sequence of length $k\geq1$ with $\de_l=1$ for some $1\leq l\leq k$. Then we have one of the following two cases:
\begin{enumerate}
\item[(a)] there are (possibly empty) words $u_1,u_2$ in letters $x_1,\ldots,x_k$ such that 
$$u_1(S) E_{i_1i_{k+1}} u_2(S) = E_{i_ri_{r+1}}^{\de_r}$$ 
for some $1\leq r\leq k$; moreover, the word $u_1u_2$ is multilinear; 

\item[(b)]there are  non-empty words $w_1,\ldots,w_r$, where $r\geq1$, in letters $x_1,\ldots,x_k$ and (possibly empty) words $u_1,u_2$ in letters $x_1,\ldots,x_k$ such that 
$$(w_1(S),\ldots,w_r(S))$$ 
is a staircase sequence with foundation $u_1(S) E_{i_1i_{k+1}} u_2(S)$;  moreover, the word $u_1u_2w_1\cdots w_r$ is multilinear; 
\end{enumerate}
\end{lemma}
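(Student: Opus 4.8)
The plan is to analyze the structure of the three-diagonal sequence $S=\TD_{\un{i},\un{\de}}$ according to the pattern of $\mathsf{T}$'s and $1$'s among $\de_1,\ldots,\de_k$, and to show that by left/right multiplication with $E_{i_1 i_{k+1}}$ we can "fold" the non-transposed blocks together. The key observation to exploit is the elementary matrix identity $E_{ab}^{\mathsf{T}} = E_{ba}$, together with $E_{ab}E_{cd} = \delta_{bc} E_{ad}$; these make the computation of any product $w(S)\cdot E_{i_1 i_{k+1}}\cdot w'(S)$ purely combinatorial. I would first record the shape of each generator: $E_{i_\ell i_{\ell+1}}^{\de_\ell}$ equals $E_{i_\ell i_{\ell+1}}$ if $\de_\ell=1$ and $E_{i_{\ell+1} i_\ell}$ if $\de_\ell=\mathsf{T}$. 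In particular, since all the indices $i_1,\ldots,i_{k+1}$ are distinct, the only way two generators can be composed nontrivially is when their overlapping index matches in the right slot, which pins down, for each prospective word, exactly which products survive.

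The main induction would be on $k$, or more naturally on the number of "blocks" in the sequence $\de_1,\ldots,\de_k$, where a block is a maximal run of equal symbols. If $\de_1 = \cdots = \de_k = \mathsf{T}$, the hypothesis $\de_l = 1$ fails, so that case is excluded; if there is exactly one block of $1$'s and no $\mathsf{T}$'s, i.e., $\de_1=\cdots=\de_k=1$, then $E_{i_1 i_2}\cdots E_{i_k i_{k+1}} = E_{i_1 i_{k+1}}$, which is already the foundation up to reading it off, and one checks case (a) holds with suitable $u_1, u_2$ multilinear in $x_1,\ldots,x_k$ (here one produces the single block $E_{i_1 i_2}^{\de_1}$ by a product of the form $x_1 \cdots x_\ell \cdot E_{i_1 i_{k+1}} \cdot (\text{transpose-type tail})$, using Lemma~\ref{lemma_TD_T} on each maximal $\mathsf{T}$-run as a black box). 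In general, I would peel off the first maximal run: if $\de_1 = 1$, a run $\de_1 = \cdots = \de_p = 1$ followed by $\de_{p+1} = \mathsf{T}$ (or $p = k$) contributes a "step" $E_{i_1 i_{p+1}}$ of the staircase, while the subsequent $\mathsf{T}$-run gets absorbed via Lemma~\ref{lemma_TD_T}-style words into repositioning $E_{i_1 i_{k+1}}$; one then applies the inductive hypothesis to the shorter sequence starting at index $p+1$ or $p+2$. Keeping careful track, the alternation of $1$- and $\mathsf{T}$-runs is exactly what produces the alternating $\de_1 = 1, \de_2 = \mathsf{T}, \de_3 = 1, \ldots$ pattern of a staircase sequence, and when there is only one surviving step we land in case (a) rather than (b).

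The hard part, as I see it, is the bookkeeping that guarantees multilinearity of the combined word $u_1 u_2 w_1 \cdots w_r$: each letter $x_\ell$ corresponds to a single generator $E_{i_\ell i_{\ell+1}}^{\de_\ell}$ of $S$, and the construction reuses letters both to build the staircase steps $w_j$ and to shuttle the foundation around. I would handle this by being explicit about which letters go where — letters from the $1$-runs become (the unique "core" of) the steps $w_j$, letters from the $\mathsf{T}$-runs become the folding words — and verifying that no $\mathsf{T}$-run letter is needed twice because each such run is consumed exactly once by one invocation of the Lemma~\ref{lemma_TD_T} maneuver, and no $1$-run letter appears outside its own step. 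A secondary subtlety is getting the orientation of each staircase step right (the definition of staircase demands $\de_1 = \de_3 = \cdots = 1$ and $\de_2 = \de_4 = \cdots = \mathsf{T}$ for the step-sequence itself, not for $S$), so I would double-check that the step built from a $1$-run of $S$ indeed has the correct form as a three-diagonal block in its own right, possibly using transposition to reverse a step when the parity of its position in the staircase demands it; this is where Lemma~\ref{lemma_TD_T} applied to singleton runs is handy. Once the combinatorial template is fixed, the degree/length claims and the identity $u_1(S)E_{i_1i_{k+1}}u_2(S) = \text{foundation}$ are a routine telescoping computation with elementary matrices.
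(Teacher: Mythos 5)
Your plan has a genuine conceptual error in how the $\mathsf{T}$-runs of $S$ are allocated. You propose that ``letters from the $1$-runs become (the unique core of) the steps $w_j$, letters from the $\mathsf{T}$-runs become the folding words.'' This cannot work, and the obstruction is connectivity of the staircase, not just bookkeeping. A staircase sequence is a three-diagonal sequence whose $\de$'s alternate $1,\mathsf{T},1,\ldots,1$; consecutive steps must share an index. If you collapse each $1$-run $E_{i_a i_{a+1}},\ldots,E_{i_b i_{b+1}}$ to a single step $E_{i_a i_{b+1}}$ and delete every $\mathsf{T}$-run by pushing it into $u_1,u_2$, the resulting steps have pairwise disjoint index intervals and do not form a three-diagonal sequence. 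The minimal example is $\de=(1,\mathsf{T},1)$: your recipe yields steps $E_{i_1i_2}$ and $E_{i_3i_4}$, which do not connect; the middle $\mathsf{T}$-generator $E_{i_2i_3}^{\mathsf{T}}$ is precisely the missing even-position step and cannot be spent on repositioning the foundation (indeed, since all indices $i_1,\ldots,i_4$ are distinct, any $u$ with $u(S)\neq I$ annihilates $E_{i_1i_4}$ from either side, so $u_1=u_2=1$ is forced and case~(b) with $w_l=x_l$ is the only option). The correct allocation is: only the \emph{prefix} and \emph{suffix} $\mathsf{T}$-runs can be absorbed into $u_1,u_2$; every \emph{interior} $\mathsf{T}$-run must be merged into a single $\de=\mathsf{T}$ step of the staircase, exactly as every $1$-run is merged into a single $\de=1$ step.

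A secondary problem is the appeal to ``Lemma~\ref{lemma_TD_T}-style words.'' Lemma~\ref{lemma_TD_T} produces $u_1=x_1$, $u_2=x_k x_{k-1}\cdots x_1$, whose concatenation $u_1u_2$ repeats $x_1$; it is \emph{not} multilinear, and this is fine there because Lemma~\ref{lemma_TD_T} treats the excluded all-$\mathsf{T}$ case, which imposes no multilinearity constraint. Importing those words as a black box inside the proof of Lemma~\ref{lemma_TD} would break the multilinearity of $u_1u_2w_1\cdots w_r$. The paper's proof instead does two normalizations \emph{before} the staircase is assembled: Claim~1 shows one may assume $\de_1=\de_k=1$ by absorbing only the boundary $\mathsf{T}$-runs with simple multilinear words $x_{l-1}\cdots x_1$ and $x_k\cdots x_{l+1}$; Claim~2 shows one may assume $\de_l\neq\de_{l+1}$ for all $l$ by merging adjacent equal-$\de$ generators via $x_lx_{l+1}$ or $x_{l+1}x_l$. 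After these two reductions the sequence is automatically alternating and starts and ends with $1$, so either $k=1$ (case~(a)) or $k\geq3$ odd and $S$ itself is the staircase (case~(b) with $w_l=x_l$, $u_1=u_2=1$). Your ``peel off the first run and recurse'' framework would need to be replaced by, or rewired into, these two up-front normalizations; as written it mislocates where the $\mathsf{T}$'s go and therefore cannot produce a valid staircase in the generic case.
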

\begin{proof}  We prove the lemma by the induction on length $k$ of the sequence $S$. For short, denote $C=E_{i_1i_{k+1}}$.

Let $k=1$. Then $\de_1=1$ and case (a) holds for $r=1$ and $u_1=u_2=1$. 

Therefore, we can assume that $k\geq2$. The following Claim 1 and Claim 2 imply that to complete the proof it is enough to consider the case of odd $k$ with  $\de_1=\de_3 = \cdots = \de_k=1$ and $\de_2=\de_4=\cdots=\de_{k-1}=\mathsf{T}$. Obviously, these conditions imply that case (b) holds for $w_1=x_1,\ldots,w_k=x_k$ and $u_1=u_2=1$, where $r=k$. Since inequality~\Ref{eq_lemma_TD_b} also is valid, the proof is completed.  

\medskip
\noindent{\bf Claim 1.} {\it Without loss of generality, we can assume that $\de_1=\de_k=1$}.
\medskip

\noindent{}To prove Claim 1 we consider the following three cases.
\begin{enumerate}
\item[{\bf (i)}] Let $\de_1=1$ and $\de_k=\mathsf{T}$. Denote 
$$l=\max\{ 1\leq s<k \,|\, \de_s=1\}.$$
Since $\de_1=1$,  we have that $l$ is well-defined. Then for $u_1=1$ and $u_2=x_k x_{k-1}\cdots x_{l+1}$, we have that
\begin{eq}\label{lemma_TDS_for_claim1_eq1}
u_1(S) \,C\, u_2(S) = E_{i_1i_{k+1}}\cdot E_{i_{k}i_{k+1}}^{\mathsf{T}} 
E_{i_{k-1}i_{k}}^{\mathsf{T}} \cdots E_{i_{l+1}i_{l+2}}^{\mathsf{T}} =E_{i_1i_{l+1}}
\end{eq}%
and $\de_1=\de_{l}=1$. Consider the three-diagonal sequence 
$$S'=\TD_{(i_1,\ldots,i_{l+1}),(\de_1,\ldots,\de_l)},$$ 
which has length $1\leq l<k$. By induction, case (a) or case (b) is valid for $S'$. In both cases we can obtain that the same case also holds for $S$. Namely, 
\begin{enumerate}
\item[$\bullet$] if case (a) holds for $S'$, i.e., for some (possibly empty) words $u'_1$, $u'_2$ in letters $x_1,\ldots,x_l$ and some  $1\leq r\leq l$ we have that
$u'_1(S) E_{i_1,i_{l+1}} u'_2(S) = E_{i_r, i_{r+1}}^{\de_r}$, since $u'_1(S)= u'_1(S')$ and $u'_2(S)=u'_2(S')$; thus  case (a) holds for $S$ for $u_1''=u'_1 u_1$ and $u_2''=u_2 u'_2$ by formula~\Ref{lemma_TDS_for_claim1_eq1}, since $u_1''u_2''$ is a miltilinear word;

\item[$\bullet$] if case (b) holds for $S'$, i.e., there are words $w_1,\ldots,w_r$ in letters $x_1,\ldots,x_l$ and (possibly empty) words $u'_1$, $u'_2$ in letters $x_1,\ldots,x_l$ such that $(w_1(S),\ldots,w_r(S))$ is a staircase sequence with the foundation  
$u'_1(S) E_{i_1,i_{l+1}} u'_2(S)$; thus  case (b) holds for $S$ for $w_1,\ldots,w_r$ and $u_1''=u'_1 u_1$, $u_2''=u_2 u'_2$ by formula~\Ref{lemma_TDS_for_claim1_eq1}, since $u_1''u_2''w_1\cdots w_r$ is a miltilinear word.
\end{enumerate}

\item[{\bf (ii)}] $\de_1=\mathsf{T}$ and $\de_k=1$.  Denote 
$$l=\min\{ 1< s\leq k \,|\, \de_s=1\}.$$
Since $\de_k=1$,  we have that $l$ is well-defined. Then for $u_1=x_{l-1} \cdots x_2x_1$ and $u_2=1$, we have that
\begin{eq}\label{lemma_TDS_for_claim1_eq2}
u_1(S) \,C\, u_2(S) =  E_{i_{l-1}i_{l}}^{\mathsf{T}}\cdots E_{i_2i_3}^{\mathsf{T}} E_{i_1i_2}^{\mathsf{T}}\cdot E_{i_1i_{k+1}} = E_{i_{l}i_{k+1}}
\end{eq}%
and $\de_l=\de_{k}=1$. Consider the three-diagonal sequence 
$$S'=\TD_{(i_l,\ldots,i_{k+1}),(\de_l,\ldots,\de_k)},$$ 
which has length $k'=k-l+1<k$, where $k'\geq1$. By induction, case (a) or case (b) is valid for $S'$. In both cases we obtain that the same case also holds for $S$. Namely, 
\begin{enumerate}
\item[$\bullet$] if case (a) holds for $S'$, i.e., for some (possibly empty) words $u'_1$, $u'_2$ in letters $x_{l},\ldots,x_{k}$ and some $l\leq r\leq k$ we have that 
$u'_1(S) E_{i_l,i_{k+1}} u'_2(S) = E_{i_r, i_{r+1}}^{\de_r}$; thus case (a) holds for $S$ for $u_1''=u'_1 u_1$ and $u_2''=u_2 u'_2$ by formula~\Ref{lemma_TDS_for_claim1_eq2}, since $u_1''u_2''$ is a miltilinear word;  

\item[$\bullet$] if case (b) holds for $S'$, i.e., there are words $w_1,\ldots,w_r$ in letters $x_l,\ldots,x_k$ and (possibly empty) words $u'_1$, $u'_2$ in letters $x_l,\ldots,x_k$ such that $(w_1(S),\ldots,w_r(S))$ is a staircase sequence with the foundation  
$u'_1(S) E_{i_l,i_{k+1}} u'_2(S)$; thus case (b) holds for $S$ for $w_1,\ldots,w_r$ and $u_1''=u'_1 u_1$, $u_2''=u_2 u'_2$ by formula~\Ref{lemma_TDS_for_claim1_eq2}, since $u_1''u_2''w_1\cdots w_r$ is a miltilinear word.
\end{enumerate}

\item[{\bf (iii)}] $\de_1=\mathsf{T}$ and $\de_k=\mathsf{T}$.  Denote
$$l=\min\{ 1< s<k \,|\, \de_s=1\} \text{ and } t=\max\{ 1< s<k \,|\, \de_s=1\}.$$%
\noindent{}Then for $u_1=x_{l-1} \cdots x_2x_1$ and $u_2=x_k x_{k-1}\cdots x_{t+1}$ we can see that $u_1(S) \,C\, u_2(S)$ is equal to 
\begin{eq}\label{lemma_TDS_for_claim1_eq3}
E_{i_{l-1}i_{l}}^{\mathsf{T}}\cdots E_{i_1 i_2}^{\mathsf{T}} E_{i_2 i_3}^{\mathsf{T}} \cdot E_{i_1i_{k+1}}
\cdot  E_{i_{k}i_{k+1}}^{\mathsf{T}} E_{i_{k+1}i_{k+2}}^{\mathsf{T}}\cdots
E_{i_{t+1}i_{t+2}}^{\mathsf{T}} =E_{i_{l}i_{t+1}}
\end{eq}%
and $\de_l=\de_{t}=1$. Consider the three-diagonal sequence 
$$S'=\TD_{(i_l,\ldots,i_{t+1}),(\de_l,\ldots,\de_t)},$$ 
which has length $k'=t-l+1<k$. By induction, case (a) or case (b) is valid for $S'$. In both cases we obtain that the same case also holds for $S$. Namely, 
\begin{enumerate}
\item[$\bullet$] if case (a) holds for $S'$, i.e., for some (possibly empty) words $u'_1$, $u'_2$ in letters $x_{l},\ldots,x_{t}$ and some $l\leq r\leq t$ we have that 
$u'_1(S) E_{i_l,i_{t+1}} u'_2(S) = E_{i_r, i_{r+1}}^{\de_r}$; thus  case (a) holds for $S$ for $u_1''=u'_1 u_1$ and $u_2''=u_2 u'_2$ by formula~\Ref{lemma_TDS_for_claim1_eq3}, since $u_1''u_2''$ is a miltilinear word;

\item[$\bullet$] if case (b) holds for $S'$, i.e., there are words $w_1,\ldots,w_r$ in letters $x_l,\ldots,x_t$ and (possibly empty) words $u'_1$, $u'_2$ in letters $x_l,\ldots,x_t$ such that $(w_1(S),\ldots,w_r(S))$ is a staircase sequence with the foundation  
$u'_1(S) E_{i_l,i_{t+1}} u'_2(S)$; thus  case (b) holds for $S$ for $w_1,\ldots,w_r$ and $u_1''=u'_1 u_1$, $u_2''=u_2 u'_2$ by formula~\Ref{lemma_TDS_for_claim1_eq3}, since $u_1''u_2''w_1\cdots w_r$ is a miltilinear word.
\end{enumerate} 

\noindent{}Therefore, Claim 1 is proven.
\end{enumerate}

\bigskip
\noindent{\bf Claim 2.} {\it Without loss of generality, we can assume that  $\de_l\neq \de_{l+1}$ for all $1\leq l<k$}.
\medskip

\noindent{}To prove Claim 2 we assume that for some $1\leq l<k$ one of the  following two cases holds.

\begin{enumerate}
\item[{\bf (i)}] Let $\de_l=\de_{l+1}=1$. Then we consider the word $u=x_l x_{l+1}$ and note that $u(S)=E_{i_l,i_{l+1}}E_{i_{l+1},i_{l+2}}=E_{i_l,i_{l+2}}$. We take away $E_{i_l,i_{l+1}}$, $E_{i_{l+1},i_{l+2}}$ from the three-diagonal sequence $S$ and add $E_{i_l,i_{l+2}}$ to $S$ instead of them. Denote by $S'$ the resulting three-diagonal sequence of length $k-1\geq1$. By induction, case (a) or case (b) is valid for $S'$. Obviously, the same case also holds for $S$.

\item[{\bf (ii)}] Let $\de_l=\de_{l+1}=\mathsf{T}$. Then we consider the word $u=x_{l+1} x_l$ and note that $u(S)=E_{i_{l+1},i_{l+2}}^{\mathsf{T}} 
 E_{i_{l},i_{l+1}}^{\mathsf{T}} =E_{i_{l},i_{l+2}}^{\mathsf{T}} $. We take away $E_{i_l,i_{l+1}}^{\mathsf{T}}$, $E_{i_{l+1},i_{l+2}}^{\mathsf{T}}$ from the three-diagonal sequence $S$ and add $E_{i_l,i_{l+2}}^{\mathsf{T}}$ to $S$ instead of them. Denote by $S'$ the resulting three-diagonal sequence of length $k-1\geq1$. By induction, case (a) or case (b) is valid for $S'$. Obviously, the same case also holds for $S$.
\end{enumerate}

The above reasoning implies that we can eliminate the case when $\de_l=\de_{l+1}$ for some $1\leq l<k$, i.e., Claim 2 is proven.
\end{proof}

\begin{cor}\label{cor_TD}
Assume that $S=\TD_{\un{i},\un{\de}}$ is a three-diagonal sequence of length $k\geq1$. Then there are  non-empty words $w_1,\ldots,w_r$, where $r\geq1$ is odd, in letters $x_1,\ldots,x_k$ and (possibly empty) words $u_1,u_2$ in letters $x_1,\ldots,x_k$ such that
\begin{eq}\label{eq_cor_form}
\rank(\Alt(w_1(S),\ldots,w_r(S))  + \al u_1(S) E_{i_1i_{k+1}} u_2(S)) = \left\{
\begin{array}{cc}
\frac{r-1}{2}, & \text{if }\al=-1 \\
\frac{r+1}{2}, & \text{if }\al\neq -1 \\
\end{array}
\right.
\end{eq}%
for all $\al\in\FF$. Moreover,
\begin{eq}\label{eq_cor_TD}
\deg(u_1) + \deg(u_2) + 1 \leq k+2 \;\text{ and }\; \deg(w_l)\leq k \text{ for all } 1\leq l\leq k.
\end{eq}
\end{cor}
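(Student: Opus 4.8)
The plan is to reduce Corollary~\ref{cor_TD} to Lemma~\ref{lemma_TD}. Given a three-diagonal sequence $S=\TD_{\un{i},\un{\de}}$ of length $k\geq1$, I split into two cases according to whether some $\de_l=1$ or all $\de_l=\mathsf{T}$. If all $\de_l=\mathsf{T}$, then Lemma~\ref{lemma_TD_T} gives words $u_1,u_2$ with $u_1(S)E_{i_1i_{k+1}}u_2(S)=E_{i_ri_{r+1}}^{\de_r}$ for $r=1$ and $\deg(u_1)+\deg(u_2)=k+1$; this is a matrix unit, so taking $r=1$, $w_1=x_1$ and rescaling we fall into the same ``foundation equals a matrix unit'' situation as case (a) of Lemma~\ref{lemma_TD}. (Alternatively one could absorb this case into the treatment below by noting that after transposing everything it mirrors the $\de_l=1$ case.) If some $\de_l=1$, I invoke Lemma~\ref{lemma_TD}: either case (a) holds, giving (possibly empty) multilinear $u_1,u_2$ with $u_1(S)E_{i_1i_{k+1}}u_2(S)=E_{i_ri_{r+1}}^{\de_r}$, or case (b) holds, giving non-empty $w_1,\ldots,w_r$ ($r\geq1$) and (possibly empty) $u_1,u_2$, with $u_1u_2w_1\cdots w_r$ multilinear, such that $(w_1(S),\ldots,w_r(S))$ is a staircase sequence with foundation $u_1(S)E_{i_1i_{k+1}}u_2(S)$; here $r$ is odd by the definition of staircase sequence. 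Case (a) is the special instance $r=1$, $w_1=x_1$ of case (b) (a length-one ``staircase'' whose foundation is a matrix unit), so it suffices to handle the staircase case.

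The core of the argument is then a direct rank computation for an ``encircled staircase'' matrix. If $(w_1(S),\ldots,w_r(S))$ is a staircase sequence with foundation $E_{pq}$, say on index chain $(j_1,\ldots,j_{r+1})$ with $j_1=p$, $j_{r+1}=q$, then $\Alt(w_1(S),\ldots,w_r(S))+\al E_{pq}$ is, up to simultaneous permutation of rows and columns, the matrix whose nonzero entries are: $+1$ in positions $(j_1,j_2),(j_3,j_4),\ldots,(j_r,j_{r+1})$, $-1$ in positions $(j_3,j_2),(j_5,j_4),\ldots,(j_r,j_{r-1})$, and $\al$ in position $(j_1,j_{r+1})$ (compare the two $4\times4$ and $6\times6$ examples in the text, where $r=3$ and $r=5$). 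The support of this matrix lies in the $\frac{r+1}{2}$ odd-indexed rows $j_1,j_3,\ldots,j_r$ and the $\frac{r+1}{2}$ even-indexed columns $j_2,j_4,\ldots,j_{r+1}$, all other rows and columns being zero, so I only need the rank of the $\frac{r+1}{2}\times\frac{r+1}{2}$ submatrix they span. That submatrix is bidiagonal: writing its rows as $R_1,\ldots,R_{(r+1)/2}$ (indexed by $j_1,j_3,\ldots,j_r$) and columns by $j_2,j_4,\ldots,j_{r+1}$, row $R_t$ has a $-1$ in column $t-1$ (for $t\geq2$) and a $+1$ in column $t$, except the last row $R_{(r+1)/2}$ also contains the entry $\al$ in column $\frac{r+1}{2}$, i.e.\ its last entry is $1+\al$. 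An upper-triangular elimination (add $R_1$ to $R_2$, the result to $R_3$, etc.) turns it into the matrix with a single entry $1+\al$ in the bottom-right corner and the superdiagonal $1$'s untouched above; hence the rank is $\frac{r+1}{2}$ if $1+\al\neq0$ and $\frac{r-1}{2}$ if $\al=-1$. This is exactly~\eqref{eq_cor_form}. The substitution $\al\mapsto -1$ when $\Char\FF=2$ is harmless since $1+\al=0$ is the only thing that matters.

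Finally I check the degree bound~\eqref{eq_cor_TD}. In case (a)/case (b) of Lemma~\ref{lemma_TD}, the words $u_1,u_2,w_1,\ldots,w_r$ together involve each letter $x_1,\ldots,x_k$ at most once, so $\deg(u_1)+\deg(u_2)+\sum_l \deg(w_l)\le k$; in particular each $\deg(w_l)\le k$ and $\deg(u_1)+\deg(u_2)\le k$, giving $\deg(u_1)+\deg(u_2)+1\le k+1\le k+2$. In the all-$\mathsf{T}$ case, Lemma~\ref{lemma_TD_T} gives $\deg(u_1)+\deg(u_2)=k+1$, so $\deg(u_1)+\deg(u_2)+1=k+2$, which is why the stated bound is $k+2$ rather than $k+1$; there $r=1$ and $\deg(w_1)=\deg(x_1)=1\le k$. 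I expect the only mildly delicate point to be bookkeeping the index chain of the staircase produced by Lemma~\ref{lemma_TD} carefully enough to see that its foundation is genuinely $u_1(S)E_{i_1i_{k+1}}u_2(S)$ and that the nonzero rows/columns are disjoint as claimed; the rank computation itself is a one-line triangularization.
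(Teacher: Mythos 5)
Your proof follows essentially the same route as the paper: reduce to Lemma~\ref{lemma_TD} (with Lemma~\ref{lemma_TD_T} for the all-$\mathsf{T}$ case), compute the rank of the encircled staircase, and derive the degree bounds from multilinearity; the paper states the staircase rank as ``easy to see'' while you spell out the computation. One small slip in the bookkeeping: since the foundation $E_{j_1 j_{r+1}}$ has row index $j_1$ and column index $j_{r+1}$ at opposite ends of the chain, the entry $\al$ sits in the upper-right corner $(1,\frac{r+1}{2})$ of the $\frac{r+1}{2}\times\frac{r+1}{2}$ nonzero block, not in the bottom diagonal slot as a $1+\al$ entry --- compare the $k=3$ example in the text, where the submatrix on rows $\{1,3\}$ and columns $\{2,4\}$ is $\left(\begin{array}{cc}1&\al\\-1&1\end{array}\right)$. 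Your row elimination then produces an upper-triangular matrix with diagonal $(1,\ldots,1,1+\al)$ and $\al$'s above in the last column (not ``superdiagonal $1$'s''), but the determinant is still $1+\al$ and the rank conclusion $\frac{r+1}{2}$ versus $\frac{r-1}{2}$ is unchanged, so the argument goes through.
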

\begin{proof} Assume that $\de_l=1$ for some $1\leq l\leq k$. Thus either case (a) or case (b) of Lemma~\ref{lemma_TD} holds. In what follows we use notations from Lemma~\ref{lemma_TD}. 

Assume that case (a) of Lemma~\ref{lemma_TD} holds. Thus
\begin{eq}\label{eq_lemma_TD_a}
\deg(u_1) + \deg(u_2)\leq k.
\end{eq}%
We take $w_1=x_r$. Since the rank from formula~\Ref{eq_cor_form}  is equal to zero in case $\al=-1$ and is equal to one otherwise, we obtain that equality~\Ref{eq_cor_form} holds. Inequality~\Ref{eq_lemma_TD_a} implies inequalities~\Ref{eq_cor_TD}.  

Assume that case (b) of Lemma~\ref{lemma_TD} holds. Thus
\begin{eq}\label{eq_lemma_TD_b}
\deg(u_1) + \deg(u_2)+\deg(w_1)+\cdots + \deg(w_r)\leq k.
\end{eq}%
It is easy to see that formula~\Ref{eq_cor_form} holds. Inequality~\Ref{eq_lemma_TD_b} implies inequalities~\Ref{eq_cor_TD}.  

Assume that $\de_1=\cdots=\de_k=\mathsf{T}$. Then we apply Lemma~\ref{lemma_TD_T} to $S$ and use the same reasoning as above in case (a), with the only difference that instead of inequality~\Ref{eq_lemma_TD_a} here we have inequality $\deg(u_1) + \deg(u_2)\leq k+1$.

%
\end{proof}

\section{Separation of orbits}\label{section_orbits}

\begin{lemma}\label{lemma_trivial}
For every $A=\diag(a_1,\ldots,a_n)\in M_n$ there are
$H_{\un{a},1},\ldots,H_{\un{a},n}\in \algX{1}$ of degrees less than $n$ such that $H_{\un{a},t}(A)=E_{tt}$ for all $1\leq t\leq n$, where $\un{a}=(a_1,\ldots,a_n)$.
\end{lemma}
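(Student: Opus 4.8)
The plan is to use Lagrange interpolation. Since $A=\diag(a_1,\dots,a_n)$ has pairwise distinct diagonal entries (this is guaranteed when we apply the lemma to the first matrix of a canonical pair, and in the statement we may assume the $a_t$ are distinct; if some coincide the idempotents $E_{tt}$ are not individually recoverable, so implicitly the $a_t$ are distinct), for each $1\leq t\leq n$ consider the polynomial
\[
p_t(x)=\prod_{\substack{1\leq s\leq n\\ s\neq t}}\frac{x-a_s}{a_t-a_s}\in\FF[x].
\]
This is well-defined because $a_t-a_s\neq0$ for $s\neq t$, and it has degree exactly $n-1<n$. I would then set $H_{\un{a},t}=p_t(X_1)\in\algX{1}$, which by definition has degree at most $n-1<n$.

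The key computation is that $p_t(A)=E_{tt}$. Since $A$ is diagonal, $p_t(A)=\diag(p_t(a_1),\dots,p_t(a_n))$, and $p_t(a_j)=\delta_{tj}$ by the standard interpolation identity: the product vanishes when $j\neq t$ because the factor with $s=j$ is zero, and equals $1$ when $j=t$. Hence $p_t(A)=E_{tt}$, as required. Note that the constant term of $p_t$ is allowed since $\algX{1}$ contains the identity matrix $I_n$, so $H_{\un{a},t}$ is a genuine element of $\algX{1}$ of degree less than $n$.

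There is essentially no obstacle here; the only point requiring a word of care is the degree bound, but $p_t$ is a product of $n-1$ linear factors, so $\deg H_{\un{a},t}=n-1$, which is strictly less than $n$ as claimed. (If one wanted to be pedantic about the case $n=1$, then $p_1=1$ has degree $0<1$, so the statement still holds vacuously in the sense that $H_{\un{a},1}(A)=I_1=E_{11}$.) I would write the proof in two sentences: exhibit $p_t$, then verify $p_t(A)=E_{tt}$ and read off the degree.
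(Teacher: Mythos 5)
Your proof is correct and reaches the same conclusion as the paper's, but by a constructive route rather than an existential one. The paper sets up the linear matrix equation $A^{n-1}y_1+\cdots+Ay_{n-1}+I_ny_n=E_{tt}$, observes that it is equivalent to a linear system whose coefficient matrix is the Vandermonde matrix $B$ with $B_{ij}=a_i^{\,n-j}$, and concludes that a solution exists because $B$ is invertible when the $a_i$ are pairwise distinct. You instead write down the Lagrange interpolation polynomial
\[
p_t(x)=\prod_{s\neq t}\frac{x-a_s}{a_t-a_s}
\]
and verify directly that $p_t(A)=E_{tt}$. These are two faces of the same fact: the Lagrange formula is precisely the explicit solution to the Vandermonde system. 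What your version buys is the closed-form expression for $H_{\un{a},t}$ (useful, e.g., in Example~\ref{ex_lemma_trivial}, which the paper works out by hand for $n=2$); what the paper's version emphasizes is the linear-algebraic mechanism, which is the style used elsewhere in Section~\ref{section_orbits}. You also correctly flagged that the lemma statement implicitly requires the $a_t$ to be distinct---the paper's own proof uses this when invoking Vandermonde invertibility, and the lemma is only ever applied to $A_1$ with simple spectrum, so the assumption is harmless but worth the parenthetical you gave it.
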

\begin{proof}Let $1\leq t\leq n$ be fixed. 
Consider the linear matrix equation $A^{n-1}y_1 + A^{n-2}y_2 + \cdots + A y_{n-1} + I_n y_n = E_{tt}$ with respect to the variables  $y_1,\ldots,y_n\in\FF$. This equation is equivalent to the linear system of equations $B\un{y}=E_t$, where $\un{y}=(y_1,\ldots,y_n)^T$, $E_t=(0,\ldots,0,1,0,\ldots,0)^T\in\FF^n$ is a column-vector with the only non-zero element in the $t^{\rm th}$ position and $B$ is the Vandermonde matrix with $B_{ij}=a_i^{n-j}$ for all $1\leq i,j\leq n$. Since $a_1,\ldots,a_n$ are pairwise distinct, $B$ is invertible and the system has a solution. \end{proof}

\begin{example}\label{ex_lemma_trivial}
Assume $A=\diag(a_1,a_2)$ with $a_1\neq a_2$. Denote 
$$H_{\un{a},1}=\frac{1}{a_1-a_2} X_1 - \frac{a_2}{a_1-a_2} I_2 \;\;\text{ and }\;\; H_{\un{a},2}=-\frac{1}{a_1-a_2} X_1 + \frac{a_1}{a_1-a_2} I_2,$$
where $\un{a}=(a_1,a_2)$. Then $H_{\un{a},t}(A)=E_{tt}$ for $t=1,2$.

\end{example}


\begin{theo}\label{theo_main1}
The polynomial invariants
$$\Set_{\si}=\{\si_t(X_1) \,|\, 1\leq t\leq n\},$$
together with abstract invariants 
$$\Set_{\zeta}=\{\zeta(F) \,|\, F\in \algX{2} \text{ has degree less or equal to } 2n-1 \}$$
is a set that separates types of elements of $D_n$. 
\end{theo}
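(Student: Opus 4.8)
The plan is to show that the invariants in $\Set_\si$ recover the eigenvalue data $\un a=(a_1,\dots,a_n)$ of $A_1$, and then that the invariants $\zeta(F)$ with $\deg F\le 2n-1$ suffice to distinguish the possible types (forests $G$) of a canonical pair $\un A=(A_1,A_2)\in D_n$ with $A_1=\diag(a_1,\dots,a_n)$ and $A_2\in\calcA$ for the canonical $\ast$-matrix $\calcA$ with $G_{\calcA}=G$. First I would take $\un A,\un B\in D_n$ and assume $f(\un A)=f(\un B)$ for all $f$ in $\Set_\si\cup\Set_\zeta$; by Theorem~\ref{theoFutorny} we may replace $\un A,\un B$ by their canonical pairs. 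Since $\si_t(X_1)$ evaluated at a pair records the $t$-th coefficient of the characteristic polynomial of the first matrix, equality of all $\si_t(X_1)$ forces $A_1$ and $B_1$ to be diagonal with the same ordered tuple of eigenvalues $\un a=(a_1<\cdots<a_n)$; in particular the vertex labels used to build the two forests are attached to the same diagonal. It remains to show the type of $\un A$ equals the type of $\un B$, i.e. $G_{\calcA}=G_{\calcB}$.

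Next I would use Lemma~\ref{lemma_trivial}: there are $H_{\un a,1},\dots,H_{\un a,n}\in\algX1$ of degree less than $n$ with $H_{\un a,t}(A_1)=E_{tt}$, and likewise $H_{\un a,t}(B_1)=E_{tt}$ since $B_1$ has the same diagonal. Composing these idempotent-producing polynomials with $X_2$ lets us ``read off'' individual entries of $A_2$ and $B_2$: for indices $i\ne j$, the product $H_{\un a,i}(X_1)X_2H_{\un a,j}(X_1)$ evaluates at $\un A$ to $(A_2)_{ij}E_{ij}$, which is zero iff $(A_2)_{ij}=0$; thus $\zeta$ of this polynomial detects whether $\calcA_{ij}=0$. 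Its degree is at most $(n-1)+1+(n-1)=2n-1$, which is exactly the bound in the definition of $\Set_\zeta$ — this is where the stated degree bound comes from. However, a single entry does not see a $\ast$ versus a free parameter that happens to vanish, so detecting entry-by-entry nullity is not by itself enough to pin down the forest. The main idea is that in a canonical $\ast$-matrix the positions of the $1$'s (the arrows of $G_{\calcA}$) are determined, via the combinatorial rules (i)--(iv) in Section~\ref{section_space}, by the \emph{pattern of forced zeros}; more precisely, following the argument of Lemma~\ref{lemma1}, if $G_{\calcA}\ne G_{\calcB}$ then there is a least pair $(i,j)$ (in the lexicographic order on $P_n$) that is an arrow of one forest but not the other, and the rules (i)--(iii) at that least position force a genuine discrepancy between a fixed entry being $1$ and being a free $\ast$.

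To convert that structural discrepancy into a separation by $\zeta$, I would argue as follows. Suppose $v_i\to v_j$ is an arrow of $G_{\calcA}$ but not of $G_{\calcB}$, and $(i,j)$ is minimal with this property. By rule (i), $\calcA_{ij}=1$, so $(A_2)_{ij}=1$. By the minimality, the path in $G_{\calcB}$ realizing rule (ii)(b) or rule (iii) for the pair $(i,j)$ uses only arrows $v_r\to v_s$ with $(r,s)<(i,j)$, which (again by minimality of $(i,j)$) are exactly the arrows of $G_{\calcA}$ below $(i,j)$; using Lemma~\ref{lemma1}'s reasoning this forces $\calcB_{ij}=0$, hence $(B_2)_{ij}=0$. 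Now the polynomial $F=H_{\un a,i}(X_1)\,X_2\,H_{\un a,j}(X_1)$ has $F(\un A)=E_{ij}\ne 0$ and $F(\un B)=0$, so $\zeta(F)(\un A)=0\ne 1=\zeta(F)(\un B)$, and $\deg F\le 2n-1$ so $\zeta(F)\in\Set_\zeta$. This contradicts our hypothesis, hence $G_{\calcA}=G_{\calcB}$, completing the proof.

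The step I expect to be the main obstacle is the second one: carefully extracting, from the combinatorial definition (i)--(iv) and the minimal-disagreement device of Lemma~\ref{lemma1}, the fact that the \emph{least} disagreeing pair must be a $1$ in one matrix and a forced $0$ in the other (rather than, say, a $\ast$ versus a $\ast$), so that a single word $H_{\un a,i}(X_1)X_2H_{\un a,j}(X_1)$ of degree $\le 2n-1$ already separates the two types. The rest is bookkeeping: the trace invariants handle the diagonal of the first matrix, and Lemma~\ref{lemma_trivial} plus one multiplication handles entrywise access to the second matrix within the degree budget.
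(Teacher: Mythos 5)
Your proof is correct and uses the same core ingredients as the paper's: $\Set_\si$ to force $A_1=B_1=\diag(a_1,\dots,a_n)$, Lemma~\ref{lemma_trivial} to produce the idempotent-valued $H_{\un a,t}$, and the degree-$\le 2n-1$ probes $H_iX_2H_j$ whose vanishing under $\zeta$ detects the zero pattern of $A_2$ and $B_2$ — so the degree budget in $\Set_\zeta$ arises the same way in both versions. The one structural difference is in the endgame: the paper constructs a single witness matrix $C$ (with $C_{ij}=0$ exactly when $\calcA_{ij}=0$ or $\calcB_{ij}=0$, and $C_{ij}=1$ otherwise), checks from the equalities $\zeta(H_iX_2H_j)(\un A)=\zeta(H_iX_2H_j)(\un B)$ that $C\in\calcA\cap\calcB$, and then applies Lemma~\ref{lemma1} as a black box to conclude $\calcA=\calcB$; you instead inline the minimal-disagreement argument from Lemma~\ref{lemma1}'s proof. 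That route also works, but the step you flagged as the main obstacle needs one precision: rule~(ii)(b) is not in play (its path contains an arrow \emph{above} $(i,j)$, not below), and you cannot directly assert $\calcB_{ij}=0$. The correct elimination is: $\calcB_{ij}\neq1$ since $v_i\to v_j\notin\arr(G_{\calcB})$; and $\calcB_{ij}\neq\ast$, because otherwise rule~(iii) gives an undirected path in $G_{\calcB}$ from $v_i$ to $v_j$ with all arrows lexicographically below $(i,j)$, which by minimality of $(i,j)$ is also a path in $G_{\calcA}$, and together with the arrow $v_i\to v_j$ of $G_{\calcA}$ this creates an undirected cycle, contradicting that $G_{\calcA}$ is a forest. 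Having eliminated $1$ and $\ast$, we get $\calcB_{ij}=0$, hence $(B_2)_{ij}=0$ while $(A_2)_{ij}=1$, and $\zeta(H_iX_2H_j)\in\Set_\zeta$ separates $\un A$ from $\un B$. With that clarification your argument closes in the same way as the paper's.
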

\begin{proof} Assume that $\un{A},\un{B}\in D_n$ and $f(\un{A})=f(\un{B})$ for all $f\in \Set_{\si} \cup \Set_{\zeta}$. To complete the proof it is enough to show that the types of $\un{A}$ and $\un{B}$ are the same. 

Since elements of $\Set_{\si}$ and $\Set_{\zeta}$ are abstract invariants, without loss of generality we can assume that $\un{A}$ and $\un{B}$ are canonical.  Since $\si_t(A_1)=\si_t(B_1)$ for all $1\leq t\leq n$, we obtain that $A_1=B_1=\diag(a_1,\ldots,a_n)$ for some $a_1<\ldots<a_n$. 

For all $1\leq t\leq n$ denote the element $H_{\un{a},t}\in \algX{1}$ from Lemma~\ref{lemma_trivial} by $H_t$, where $\un{a}=(a_1,\ldots,a_n)$.
Let graphs $G_{\!\calcA}$ and  $G_{\calcB}$, respectively, be the types of $\un{A}$ and $\un{B}$, respectively, where $\calcA$ and $\calcB$ are the corresponding canonical $\ast$-matrices, i.e., $A_2\in \calcA$ and $B_2\in \calcB$. Since  
\begin{eq}\label{eq_H}
H_i X_2 H_j(\un{A}) = E_{ii} A_2 E_{jj} = (A_2)_{ij} E_{ij}
\end{eq}
for all $1\leq i,j\leq n$, we obtain that 
\begin{eq}\label{eq_claim}
\zeta(H_i X_2 H_j)(\un{A})= \left\{
\begin{array}{rl}
0, & \text{if }\calcA_{ij} = 1\\
0, & \text{if } \calcA_{ij} = \ast \text{ and } (A_2)_{ij} \neq 0 \\
1, & \text{if } \calcA_{ij} = \ast \text{ and } (A_2)_{ij} =0 \\
1, & \text{if } \calcA_{ij}=0 \\
\end{array}
\right..
\end{eq}%
Similarly, we have that 
\begin{eq}\label{eq_claim2}
\zeta(H_i X_2 H_j)(\un{B})= \left\{
\begin{array}{rl}
0, & \text{if }\calcB_{ij} = 1\\
0, & \text{if } \calcB_{ij} = \ast \text{ and } (B_2)_{ij} \neq 0 \\
1, & \text{if } \calcB_{ij} = \ast \text{ and } (B_2)_{ij} =0 \\
1, & \text{if } \calcB_{ij}=0 \\
\end{array}
\right..
\end{eq}%
Define a matrix $C\in M_n$ as follows:
$$C_{ij}=\left\{
\begin{array}{cl}
0,& \text{ if } \calcA_{ij}=0 \text{ or } \calcB_{ij}=0  \\
1,& \text{ otherwise } \\
\end{array}
\right.$$
for all $1\leq i,j\leq n$. Note that $\zeta(H_i X_2 H_j)\in \Set_{\zeta}$. Thus $\zeta(H_i X_2 H_j)(\un{A})=\zeta(H_i X_2 H_j)(\un{B})$ and we can see that $C\in\calcA$ as well as $C\in\calcB$. Therefore, it follows from Lemma~\ref{lemma1} that $\calcA=\calcB$, i.e., $\un{A}$ and $\un{B}$ have one and the same type. 
\end{proof}

\begin{theo}\label{theo_main2}
The $\GL_n$-orbits on $D_n$ are separated by the set 
$$\Set_{\rank}=\{\rank(F) \,|\, F\in \algX{2} \text{ with } \deg(F)\leq (n+1)(2n-1) \}.$$
\end{theo}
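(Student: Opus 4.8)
The plan is to reduce the separation of $\GL_n$-orbits on $D_n$ to the separation of types (already handled by Theorem~\ref{theo_main1}, whose invariants $\zeta(F)=\rank_0(F)$ are themselves of the form $\rank(F)$ up to the obvious translation, since $\zeta(F)=1\iff\rank(F)=0$) together with the separation of the free parameters of a fixed type. So the first step is: given $\un{A},\un{B}\in D_n$ with $\rank(F)(\un{A})=\rank(F)(\un{B})$ for all $F\in\algX{2}$ of degree $\leq (n+1)(2n-1)$, use the degree-$\leq 2n-1$ sub-collection to run Theorem~\ref{theo_main1} and conclude $\un{A},\un{B}$ (which we may assume canonical since $\rank(F)$ is an abstract invariant) have the same type $G=G_{\!\calcA}$, hence $A_1=B_1=\diag(a_1,\dots,a_n)$ and $A_2,B_2\in\calcA$ for the same canonical $\ast$-matrix $\calcA$. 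It remains to show $A_2=B_2$, i.e. that the values of all free parameters coincide.

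The second and central step is to extract each individual free parameter as (a simple function of) the rank of an explicit polynomial in $X_1,X_2$ evaluated at $\un{A}$, with controlled degree. Fix a position $(i,j)$ with $\calcA_{ij}=\ast$ and $i\neq j$; by clause (iii) of the definition of canonical there is a unique undirected path $a$ from $v_i$ to $v_j$ in $G$, all of whose arrows $v_r\to v_s$ satisfy $(r,s)<(i,j)$, so along $a$ the entries $(A_2)_{rs}$ or $(A_2)_{sr}$ are all equal to $1$. Using the idempotents $H_t=H_{\un a,t}\in\algX{1}$ of Lemma~\ref{lemma_trivial} (degree $\leq n-1$, with $H_t(A_1)=E_{tt}$), conjugating $X_2$ appropriately picks out single matrix units: the sequence of matrices $\big(H_{p_0}X_2H_{p_1}(\un A),\,H_{p_1}X_2H_{p_2}(\un A),\dots\big)$ along the vertices $v_{p_0}=v_i,\dots,v_{p_k}=v_j$ of the path is exactly a three-diagonal sequence $\TD_{\un p,\un\de}$ in the sense of Section~\ref{section_combinatorics}, each $\de$ recording whether the arrow points forward or backward. (Each factor $H_{p}X_2H_{q}(\un A)=(A_2)_{pq}E_{pq}$ equals $E_{pq}$ when $v_p\to v_q\in G$, and $E_{qp}^{\mathsf T}$ read the other way; for the terminal step, which may land on the free parameter itself, we keep the coefficient $(A_2)_{ij}$ unfixed.) Now apply Corollary~\ref{cor_TD} to this three-diagonal sequence: it furnishes words $w_1,\dots,w_r$ ($r$ odd) and $u_1,u_2$ such that
$$
\rank\!\big(\Alt(w_1(S),\dots,w_r(S))+\al\,u_1(S)E_{p_0p_k}u_2(S)\big)=\tfrac{r-1}{2}\ \text{ if }\al=-1,\quad \tfrac{r+1}{2}\ \text{ otherwise.}
$$
Substituting $u_1(S)$, $u_2(S)$, $w_l(S)$ by the corresponding products of the $H_tX_2H_t$-type blocks, and using that $u_1(S)E_{p_0p_k}u_2(S)$ in our setting is $\pm(A_2)_{ij}E_{ij}$-shaped while the foundation slot is occupied precisely by the $H_iX_2H_j$-block carrying the free parameter, one sees that there is a single non-commutative polynomial $F_{ij}\in\algX{2}$ — built from $X_1$ (through the $H_t$'s) and $X_2$ — for which $\rank\big(F_{ij}(\un A)\big)$ detects whether $(A_2)_{ij}$ equals a prescribed value; more precisely, by forming $F_{ij}$ with the constant absorbed, $\rank(F_{ij}(\un A))$ drops by one exactly when $(A_2)_{ij}$ takes the critical value, and ranging the ``test value'' over $\FF$ this pins down $(A_2)_{ij}$ uniquely (here Convention~\ref{conv} and the fixed total order on $\FF$ are used only to name the $a_t$; pinning down the free parameter works over any field because a rank jump occurs at exactly one scalar). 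Running this over all free positions $(i,j)$ of $\calcA$ forces $A_2=B_2$, hence $\un A=\un B$ as canonical pairs, so $\GL_n\cdot\un A=\GL_n\cdot\un B$ by Theorem~\ref{theoFutorny}.

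The third step is the degree bookkeeping. Each block $H_pX_2H_q$ has degree at most $2(n-1)+1=2n-1$. Corollary~\ref{cor_TD} gives $\deg(u_1)+\deg(u_2)+1\leq k+2$ and $\deg(w_l)\leq k$ where $k\leq n-1$ is the length of the path (at most $n-1$ arrows on an undirected path through $n$ vertices). The assembled polynomial $F_{ij}$ is a product of at most $\deg(w_1)+\cdots+\deg(w_r)+\deg(u_1)+\deg(u_2)+1\leq k+2\leq n+1$ blocks (using inequality~\Ref{eq_lemma_TD_b}, resp.\ \Ref{eq_lemma_TD_a}, to bound the total number of $X_2$-occurrences by $k\le n-1$, plus the foundation block), each of degree $\leq 2n-1$, so $\deg(F_{ij})\leq (n+1)(2n-1)$, which is exactly the bound in the statement. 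One must also separately treat diagonal free parameters $\calcA_{ii}=\ast$ (clause (iv)): there $H_iX_2H_i(\un A)=(A_2)_{ii}E_{ii}$ already has rank jumping at $(A_2)_{ii}$, so $\rank\big(H_iX_2H_i - c\,H_i\big)$ for $c\in\FF$ isolates it, with degree $\leq 2n-1\leq (n+1)(2n-1)$.

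The main obstacle I anticipate is the middle step: cleanly matching the abstract three-diagonal combinatorics of Corollary~\ref{cor_TD} to the concrete matrix-unit blocks $H_pX_2H_q(\un A)$ along the canonical path, and verifying that the foundation $E_{i_1i_{k+1}}$ produced by the Corollary is genuinely the slot housing the free parameter $(A_2)_{ij}$ — so that the rank formula \Ref{eq_cor_form}, read with $\al=(A_2)_{ij}$ (or $\al$ equal to a test value, after absorbing), really does detect that parameter — all while keeping every word multilinear so that the substitution $x_\ell\mapsto$ (block) is unambiguous and the degree estimate above holds. Handling simultaneously the case $\de_1=\cdots=\de_k=\mathsf T$ (an all-backward path, where one invokes Lemma~\ref{lemma_TD_T} instead and the $u$-degree bound becomes $k+1$, still within budget) is a minor but necessary wrinkle.
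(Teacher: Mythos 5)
Your proposal is correct and follows essentially the same path as the paper: reduce to canonical pairs, invoke Theorem~\ref{theo_main1} (suitably translated into $\rank$-invariants) to match types and $A_1=B_1$, then for each free parameter $(i,j)$ build the three-diagonal sequence along the unique undirected path, apply Corollary~\ref{cor_TD}, and form a rank invariant whose value detects $(A_2)_{ij}$; the paper's $\eta^{i,j}_{\un A}$ plays the role of your $F_{ij}$.

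One small correction to the degree bookkeeping: $F_{ij}$ (or $\eta_{\un A}^{i,j}$) is not a product of $\deg(w_1)+\cdots+\deg(w_r)+\deg(u_1)+\deg(u_2)+1$ blocks; it is a sum of terms $w_l(H)$ and $u_1(H)\,h_0\,u_2(H)$, each of which is a product of blocks of degree $\le 2n-1$. Its degree is therefore the \emph{maximum} of the term degrees, not the total block count. Moreover, in the all-$\mathsf T$ case (Lemma~\ref{lemma_TD_T}) one has $\deg(u_1)+\deg(u_2)=k+1$, so the sum $\deg(w_1)+\deg(u_1)+\deg(u_2)+1=k+3$ exceeds $k+2$ and your claimed inequality fails there; but since the longest single term is $u_1(H)h_0u_2(H)$ with at most $\deg(u_1)+1+\deg(u_2)\le k+2\le n+1$ blocks (each of degree $\le 2n-1$), the bound $\deg(F_{ij})\le(n+1)(2n-1)$ does hold. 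So the conclusion is fine; only the intermediate ``total occurrence'' count is the wrong quantity to bound.
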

\begin{proof}
Assume that $\un{A},\un{B}\in D_n$ and $f(\un{A})=f(\un{B})$ for all $f\in \Set_{\rank}$. To complete the proof we have to show that $\GL_n\cdot \un{A} = \GL_n\cdot \un{B}$. Since elements of $\Set_{\rank}$ are abstract invariants, without loss of generality we can assume that $\un{A}$ and $\un{B}$ are canonical.  Since $A_1,B_1$ are diagonal and $\rank(A_1 - \la I_n)=\rank(B_1 - \la I_n)$ for all $\la\in\FF$, we obtain that $A_1=B_1=\diag(a_1,\ldots,a_n)$ for some $a_1<\ldots<a_n$. Since for every $C\in M_n$ we have that $\zeta(C)=1$ if and only if $\rank(C)=0$, we can apply Theorem~\ref{theo_main1} to obtain that types of $\un{A}$ and $\un{B}$ are the same. As in the proof of Theorem~\ref{theo_main1},  the graph $G_{\!\calcA}$ is the type of $\un{A}$ and $\un{B}$, where $\calcA$ is the corresponding canonical $\ast$-matrix, i.e., $A_2,B_2\in \calcA$. For all $1\leq t\leq n$ denote the element $H_{\un{a},t}\in \algX{1}$ from Lemma~\ref{lemma_trivial} by $H_t$, where $\un{a}=(a_1,\ldots,a_n)$.

 To show that $A_2=B_2$ we consider some $1\leq i,j\leq n$. In this proof we establish that  $(A_2)_{ij}=(B_2)_{ij}$.

If $\calcA_{ij}\neq \ast$, then $\calcA_{ij}=0$ or $\calcA_{ij}=1$; in both cases we have $(A_2)_{ij}=(B_2)_{ij}$, since $A_2,B_2\in \calcA$. 

Assume $\calcA_{ij}=\ast$. Formula~\Ref{eq_claim} implies that  
$$\rank(H_i X_2 H_j)(\un{A}) = 0 \;\text{ if and only if }\; (A_2)_{ij}=0$$
and the same claim holds for $\rank(H_i X_2 H_j)(\un{B})$. Since 
$$\rank(H_i X_2 H_j)(\un{A})=\rank(H_i X_2 H_j)(\un{B}),$$ 
we obtain that $(A_2)_{ij}=0$ if and only if $(B_2)_{ij}=0$. Therefore, we assume that
\begin{eq}\label{eq_nonzero}
(A_2)_{ij}\neq 0 \text{ and } (B_2)_{ij}\neq0.
\end{eq}

Assume $i=j$. Consider the following element from $\Set_{\rank}$:
\begin{eq}\label{eq_def_etaii}
\eta_{\un{A}}^{i,i} := \rank\left((A_2)_{ii} I_n -  H_i X_2 H_i\right).
\end{eq}%
Since $\eta_{\un{A}}^{i,i}(\un{A})=\eta_{\un{A}}^{i,i}(\un{B})$, we obtain that $(A_2)_{ii}=(B_2)_{ii}$.

In the rest of the proof we assume that $i\neq j$. Since $\calcA_{ij}=\ast$, the definition of canonical $\ast$-matrix implies that in graph $G_{\!\calcA}$ there exists (a unique) undirected path $a$ from $v_i$ to $v_j$. We have $a=a_{i_1i_2}^{\de_1}a_{i_2i_3}^{\de_2}\cdots a_{i_ki_{k+1}}^{\de_k}$ for $i_1=i$, $i_{k+1}=j$, where  $\de_1,\ldots,\de_k\in\{1,\mathsf{T}\}$ and 
$$a_{ls}^{\de}=\left\{
\begin{array}{cc}
\text{the arrow } v_l\to v_s \text{ of } G_{\!\calcA}, & \text{ if } \de=1 \\
\text{the arrow } v_s\to v_l \text{ of } G_{\!\calcA}, & \text{ if } \de=\mathsf{T} \\
\end{array}
\right..
$$%
Consider a three-diagonal sequence  $S=\TD_{\un{i},\un{\de}}$ of length $k\geq1$. Corollary~\ref{cor_TD} implies that there are  non-empty words $w_1,\ldots,w_r$, where $r\geq1$, in letters $x_1,\ldots,x_k$ and (possibly empty) words $u_1,u_2$ in letters $x_1,\ldots,x_k$ such that 
$$\rank(\Alt(w_1(S),\ldots,w_r(S))  + \al u_1(S) E_{i_1i_{k+1}} u_2(S)) = \left\{
\begin{array}{cc}
\frac{r-1}{2}, & \text{if }\al=-1 \\
\frac{r+1}{2}, & \text{if }\al\neq -1 \\
\end{array}
\right.
$$ 
for all $\al\in\FF$. Denote $h_0=H_{i_1} X_2 H_{i_{k+1}}$ and for every $1\leq l\leq k$ denote 
$$h_l=\left\{
\begin{array}{cc}
H_{i_l} X_2 H_{i_{l+1}} & \text{ if } \de_l=1 \\
H_{i_{l+1}} X_2 H_{i_l} & \text{ if } \de_l=\mathsf{T} \\
\end{array}
\right..
$$%
Since $\calcA_{i_l,i_{l+1}} = 1$ in case $\de_l=1$ and  $\calcA_{i_{l+1},i_l} = 1$ in case $\de_l=\mathsf{T}$, formula~\Ref{eq_H} implies that 
\begin{eq}\label{eq_hA}
h_l(\un{A})=h_l(\un{B})=E_{i_l,i_{l+1}}^{\de_l}
\end{eq}%
Denote by $H=(h_1,\ldots,h_k)$ and for a word $w$ in letters $x_1,\ldots,x_k$ we write by $w(H)\in \algX{2}$ for the result of substitutions $x_1\to h_1,\ldots,x_k\to h_k$ in $w$. Note that it follows from formula~\Ref{eq_hA} that 
\begin{eq}\label{eq_wH}
w(H)(\un{A})=w(H)(\un{B})=w(S).
\end{eq}%

Consider the following $\GL_n$-invariant map from $M_n^2$ to $\ZZ$:
\begin{eq}\label{eq_def_eta}
\eta_{\un{A}}^{i,j} := \rank\left((A_2)_{ij}\Alt(w_1(H),\ldots, w_r(H)) - u_1(H) h_0 u_2(H)\right)
\end{eq}%
Formula~\Ref{eq_wH} implies that  
$$\eta_{\un{A}}^{i,j}(\un{A})=\rank\left(   (A_2)_{ij}
\Alt(w_1(S),\ldots,w_r(S))- (A_2)_{ij} u_1(S) E_{i_1,i_{k+1}} u_2(S) \right)=\frac{r-1}{2}.$$
Similarly, 
$$\eta_{\un{A}}^{i,j}(\un{B})=\rank\left(   (A_2)_{ij}
\Alt(w_1(S),\ldots,w_r(S))- (B_2)_{ij} u_1(S) E_{i_1,i_{k+1}} u_2(S) \right)$$
is equal to $(r-1)/2$ if and only if $(A_2)_{ij}=(B_2)_{ij}$. Therefore, $A_2=B_2$ in case $\eta_{\un{A}}^{i,j}\in \Set_{\rank}$. 

To complete the proof we have to establish that $\eta_{\un{A}}^{i,j}$ lies in $\Set_{\rank}$. Since $k$ is the length of the unique undirected path from $v_i$ to $v_j$ in the graph $G_{\!\calcA}$ with $n$ vertices, we obtain that $k\leq n-1$. Note that degrees of $h_0,h_1,\ldots,h_k\in \algX{2}$ are less than $2n$. Using restrictions on the degrees of words $w_1,\ldots,w_r$, $u_1,u_2$ from Corollary~\ref{cor_TD}  we obtain the required.
\end{proof}

The next corollary follows from Theorem~\ref{theo_main2} and Remark~\ref{remark_abs_inv2}.

\begin{cor}\label{cor_theo_main2}
The abstract invariants of width one:
$$\rank\nolimits_t(F), \text{ where } 0\leq t\leq n \text{ and }F\in \algX{2} \text{ with } \deg(F)\leq(n+1)(2n-1), $$
separate $\GL_n$-orbits on $D_n$.
\end{cor}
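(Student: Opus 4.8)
The plan is to deduce Corollary~\ref{cor_theo_main2} directly from Theorem~\ref{theo_main2} together with Remark~\ref{remark_abs_inv2}, using nothing beyond elementary bookkeeping. Recall from Remark~\ref{remark_rank} that for a single matrix $\rank(C)=\rank(C')$ holds if and only if $\rank_t(C)=\rank_t(C')$ for all $0\leq t\leq n$. Applying this pointwise to $C=F(\un{A})$ and $C'=F(\un{B})$, I would observe that for a fixed $F\in\algX{2}$ the value $\rank(F)(\un{A})$ equals $\rank(F)(\un{B})$ if and only if $\rank_t(F)(\un{A})=\rank_t(F)(\un{B})$ for every $0\leq t\leq n$. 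Hence the set $\Set_{\rank}$ of Theorem~\ref{theo_main2} and the set
$$
\Set_{\rank}'=\{\rank\nolimits_t(F)\,|\,0\leq t\leq n,\ F\in\algX{2},\ \deg(F)\leq (n+1)(2n-1)\}
$$
separate exactly the same pairs of points of $M_n^2$, and in particular they separate the same pairs in $D_n$.

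Next I would check that $\Set_{\rank}'$ genuinely consists of abstract invariants of width one. By Remark~\ref{remark_abs_inv}, each $\rank_t(F)$ with $F\in\algX{2}$ belongs to $\F{n,2}^{\GL_n}$, so these are indeed abstract matrix invariants; alternatively one can view $\rank_t(F)=\rank_t(F(X_1,X_2))$ as the composition of the single generic-matrix word $F\in\algX{2}$ with the one-variable abstract invariant $\rank_t\in\F{n,1}^{\GL_n}$, which by Remark~\ref{remark_abs_inv2} lands in $\F{n,2}^{\GL_n}$. Since $\rank_t$ lies in $\F{n,1}^{\GL_n}$, the definition of width gives that $\rank_t(F)$ has width at most one (and, as noted after the definition of width, exactly one when it is nonconstant). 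So every element of $\Set_{\rank}'$ is an abstract invariant of width one with degree bound $(n+1)(2n-1)$ on the underlying word, which is precisely the list displayed in the corollary.

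Finally I would conclude: let $\un{A},\un{B}\in D_n$ satisfy $f(\un{A})=f(\un{B})$ for all $f\in\Set_{\rank}'$. By the first paragraph this forces $\rank(F)(\un{A})=\rank(F)(\un{B})$ for every $F\in\algX{2}$ with $\deg(F)\leq(n+1)(2n-1)$, i.e.\ $g(\un{A})=g(\un{B})$ for all $g\in\Set_{\rank}$. Theorem~\ref{theo_main2} then yields $\GL_n\cdot\un{A}=\GL_n\cdot\un{B}$, which is exactly the statement that $\Set_{\rank}'$ separates $\GL_n$-orbits on $D_n$.

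There is essentially no obstacle here: the corollary is a routine repackaging of Theorem~\ref{theo_main2} through the equivalence ``$\rank$ equal $\iff$ all $\rank_t$ equal'' of Remark~\ref{remark_rank}, and the only thing to be careful about is to keep the same degree bound $(n+1)(2n-1)$ on $F$ when passing from $\rank(F)$ to the family $\{\rank_t(F)\}_{0\leq t\leq n}$, which is immediate since $F$ itself is unchanged. The mild conceptual point worth spelling out is simply that $\rank_t(F)$, unlike $\rank(F)$, actually lies in $\F{n,2}^{\GL_n}$ and has a legitimate width, so that the phrase ``abstract invariants of width one'' in the statement is justified.
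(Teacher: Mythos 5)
Your proposal is correct and takes essentially the same route the paper intends: the paper states that the corollary "follows from Theorem~\ref{theo_main2} and Remark~\ref{remark_abs_inv2}," and your argument is precisely that deduction spelled out, using Remark~\ref{remark_rank} to translate separation by $\rank(F)$ into separation by the family $\{\rank_t(F)\}_{0\leq t\leq n}$ for the same words $F$, and Remark~\ref{remark_abs_inv2} (together with Remark~\ref{remark_abs_inv}) to confirm that each $\rank_t(F)$ is an abstract invariant of width one.
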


\begin{example}\label{ex_theo_main2} Consider $\un{a}=(a_1,\ldots,a_4)\in\FF^4$ with pairwise different entries and the canonical $\ast$-matrix 
$$\calcA = \left(
\begin{array}{cccc}
\ast & 0 & 0 & 0 \\
1 & \ast & 1 & 0 \\
\ast & \ast & \ast & 0 \\
1 & \ast & \ast & \ast \\
\end{array}
\right) \;\text{ with }\;\;
G_{\!\calcA}: v_4 \longrightarrow v_1 \longleftarrow v_2 \longrightarrow v_3.$$
Denote by $D_4^{\un{a},\calcA}$ the set of all pairs $\un{A}\in D_4$, where eigenvalues of $A_1$ are $\{a_1,a_2,a_3,a_4\}$ and the type of $\un{A}$ is $G_{\calcA}$.  For short, denote 
$$H_{ls}=H_{\un{a},l} X_2 H_{\un{a},s}\in\algX{2},$$
where $1\leq l,s\leq 4$ and the definition of $H_{\un{a},l}\in \algX{1}$ is given in Lemma~\ref{lemma_trivial}.  By Theorem~\ref{theo_main2}, the $\GL_4$-orbits on $D_4^{\un{a},\calcA}$ are separated by  $\Set_{\rank}$.  Moreover, $\GL_4$-orbits on $D_4^{\un{a},\calcA}$ are separated by the following proper subset $S$ of $\Set_{\rank}$:
$$\rank\left(\la I_4 -  H_{ii} \right), 1\leq i\leq 4, \;\; \rank\left(\la H_{21} -  H_{23} H_{31}\right), \;\; \rank\left(\la H_{23} -  H_{23} H_{32} H_{23}\right) $$
$$\rank\left(\la H_{41} -  H_{42} H_{21}\right),\;\; \rank\left(\la (H_{41} - H_{21} + H_{23}) -  H_{43}\right), \text{ where }\la\in\FF.$$

To prove this result we use the algorithm given in the proof of Theorem~\ref{theo_main2} for constructing the separating set $S$. Consider some $\un{A}\in D_4^{\un{a},\calcA}$ with $A_2=(\al_{ij})_{1\leq i,j\leq 4}$. For each $1\leq i,j\leq 4$ with $\calcA_{ij}=\ast$ we explicitly construct $\eta_{\un{A}}^{i,j}\in \Set_{\rank}$ that uniquely determines $\al_{ij}$. Note that $H_{ij}(\un{A})=\al_{ij} E_{ij}$ (see formula~\Ref{eq_H}).

\medskip
\noindent{}{\bf 1.} If $i=j$, then $\eta^{i,i}_{\un{A}}=\rank\left(\al_{ii} I_n -  H_{ii} \right)$.

\medskip
\noindent{}{\bf 2.} Let $(i,j)=(3,1)$. Then the undirected path from $v_3$ to $v_1$ is $a=a_{32}^{\mathsf{T}}a_{21}$ and $S=(E_{32}^{\mathsf{T}},E_{21})$. Thus equality~\Ref{eq_cor_form} holds for $w_1=x_2$, $u_1=x_1$, $u_2=1$, where $r=1$. Therefore,
$$\eta^{3,1}_{\un{A}}=\rank\left(\al_{31} H_{21} -  H_{23} H_{31}\right).$$

\medskip
\noindent{}{\bf 3.}  Let $(i,j)=(3,2)$. Then the undirected path from $v_3$ to $v_2$ is $a=a_{32}^{\mathsf{T}}$ and $S=(E_{32}^{\mathsf{T}})$. Thus equality~\Ref{eq_cor_form} holds for $w_1=x_1$, $u_1=u_2=x_1$, where $r=1$. Therefore,
$$\eta^{3,2}_{\un{A}}=\rank\left(\al_{32} H_{23} -  H_{23} H_{32} H_{23}\right).$$

\medskip
\noindent{}{\bf 4.}  Let $(i,j)=(4,2)$. Then the undirected path from $v_4$ to $v_2$ is $a=a_{41}a_{12}^{\mathsf{T}}$ and $S=(E_{41},E_{12}^{\mathsf{T}})$. Thus equality~\Ref{eq_cor_form} holds for $w_1=x_1$, $u_1=1$, $u_2=x_2$, where $r=1$. Therefore,
$$\eta^{4,2}_{\un{A}}=\rank\left(\al_{42} H_{41} -  H_{42} H_{21}\right).$$

\medskip
\noindent{}{\bf 5.}  Let $(i,j)=(4,3)$. Then the undirected path from $v_4$ to $v_2$ is $a=a_{41}a_{12}^{\mathsf{T}}a_{23}$ and $S=(E_{41},E_{12}^{\mathsf{T}},E_{23})$. Thus equality~\Ref{eq_cor_form} holds for $w_l=x_l$ for $1\leq l\leq 3$, $u_1=u_2=1$, where $r=3$. Therefore,
$$\eta^{4,3}_{\un{A}}=\rank\left(\al_{43} (H_{41} - H_{21} + H_{23}) -  H_{43}\right).$$
\end{example}

\section{Examples}\label{section_examples}

\begin{lemma}\label{lemma_ex1} The $\GL_n$-orbits on $M_n^m$ are not separated by abstract invariants of width one in case $n\geq 3$ and $m\geq2$. In particular,  Conjecture~\ref{conj_CH} does not hold in general for $n\geq3$ and $m\geq2$. 
\end{lemma}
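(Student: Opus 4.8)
The plan is to exhibit two pairs of $3 \times 3$ matrices (which then extend to $n \geq 3$ and $m \geq 2$ by padding with zeros) that lie in distinct $\GL_n$-orbits but cannot be separated by any abstract invariant of width one. The natural strategy, following Hadwin and Larson, is to pick $\un{A} = (A_1, A_2)$ and $\un{B} = (B_1, B_2)$ such that for \emph{every} single element $F \in \algX{2}$, the matrices $F(\un{A})$ and $F(\un{B})$ are conjugate — i.e., $\GL_n \cdot F(\un{A}) = \GL_n \cdot F(\un{B})$ — yet no simultaneous conjugation sends $\un{A}$ to $\un{B}$. If this holds, then for any $h \in \F{n,1}^{\GL_n}$ we get $h(F(\un{A})) = h(F(\un{B}))$, so no width-one abstract invariant $h(F)$ distinguishes them; since width-one invariants of the form $h(F_1,\ldots,F_d)$ with $d=1$ are exactly these, the orbits are not separated.

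First I would set up the candidate example. A clean choice is to take $A_1 = B_1$ to be a fixed matrix — say a regular nilpotent or a generic diagonalizable matrix — and let $A_2, B_2$ differ by an automorphism of the algebra generated by $A_1$ that is \emph{not} realized by conjugation. Concretely, following Example 5 of \cite{HadwinLarson_2003}, one takes $A_2$ and $B_2 = A_2^{\mathsf{T}}$ (or a twist thereof) in a situation where the algebra $\algX{2}$ evaluated at $\un{A}$ is closed under transposition up to conjugacy but $\un{A}$ and $\un{A}^{\mathsf{T}}$ are not simultaneously conjugate. Then I would verify the two required properties: (1) for every word $w$ in $X_1, X_2$, the matrices $w(\un{A})$ and $w(\un{B})$ have the same conjugacy class — this typically reduces to checking that $w(\un{B}) = w(\un{A})^{\mathsf{T}}$ (or $P w(\un{A}) P^{-1}$ for a word-independent $P$ composed with transpose), using that a matrix is always conjugate to its transpose; (2) $\un{A}$ and $\un{B}$ are not in the same $\GL_n$-orbit — which one checks by a direct argument that no single $g$ simultaneously conjugates $A_1 \to B_1$ and $A_2 \to B_2$, e.g. by computing the centralizer of $A_1$ and showing no element of it conjugates $A_2$ to $B_2$.

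Next I would pass from words to arbitrary $F \in \algX{2}$: since $F$ is a noncommutative polynomial, $F = \sum_w c_w\, w$ with scalars $c_w$, and if $w(\un{B}) = P\, w(\un{A})^{\mathsf{T}} P^{-1}$ with $P$ independent of $w$, then (using that transposition is linear and anti-multiplicative, so it is really $F \mapsto F^{\mathsf{op}}$ that appears — one must be slightly careful and perhaps use $F(\un{B}) = (F^{\leftarrow}(\un{A}))^{\mathsf{T}}$ up to conjugacy, where $F^{\leftarrow}$ reverses each monomial) we still get $F(\un{A})$ and $F(\un{B})$ conjugate. Here the subtlety that transposition reverses products means the example must be chosen so that $\un{A}$ is simultaneously conjugate to the reversed-and-transposed tuple as well; this is exactly the content of the Hadwin–Larson construction and is the step I expect to require the most care. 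Finally, to deduce the statement about Conjecture~\ref{conj_CH}: since $\si_t$ and $\rank_t$ are width-one invariants, the above shows $\rank F(\un{A}) = \rank F(\un{B})$ for all noncommutative $F$, while $\GL_n \cdot \un{A} \neq \GL_n \cdot \un{B}$, contradicting Conjecture~\ref{conj_CH}. The main obstacle is arranging a genuinely small example (size $3$, two matrices) where the conjugacy-invariance under $F \mapsto F(\un{B})$ holds for \emph{all} $F$ simultaneously while the orbits stay distinct — i.e., getting the bookkeeping of transpose-versus-reversal correct and verifying non-conjugacy of the tuples explicitly.
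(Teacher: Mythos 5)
Your overall strategy is exactly the one the paper uses: exhibit $\un{A},\un{B}\in M_3^2$ in distinct $\GL_3$-orbits such that for every $F\in\algX{2}$ the single matrices $F(\un{A})$ and $F(\un{B})$ are conjugate. That is the right plan, and your reasoning that this defeats all width-one abstract invariants and hence the two-sided Curto--Herrero conjecture is correct.

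However, there is a genuine gap: you never actually produce a concrete example. You gesture at Hadwin--Larson's transpose construction, correctly observe that transposition reverses products, and then explicitly concede that ``getting the bookkeeping of transpose-versus-reversal correct and verifying non-conjugacy of the tuples explicitly'' is the main obstacle---but you do not resolve it. As you yourself note, the naive choice $\un{B}=(A_1^{\mathsf{T}},A_2^{\mathsf{T}})$ only makes $w(\un{B})$ conjugate to the \emph{reversal} of $w$ evaluated at $\un{A}$, not to $w(\un{A})$ itself, and $A_1A_2$ need not be conjugate to $A_2A_1$. Without the actual example in hand, the proof is only a plan.

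The paper sidesteps the transpose-reversal problem entirely with a much simpler device: it picks $\un{A}=(E_{12},E_{13})$ and $\un{B}=(E_{12},E_{32})$, so that $A_1A_2=A_2A_1=0$ and $B_1B_2=B_2B_1=0$. Consequently every word of length at least two vanishes on both tuples, and an arbitrary $F\in\algX{2}$ satisfies $F(\un{A})=\ga I_3+\al E_{12}+\be E_{13}$ and $F(\un{B})=\ga I_3+\al E_{12}+\be E_{32}$ for scalars $\al,\be,\ga$ depending on $F$. These two matrices are equal when $\be=0$, and when $\be\neq 0$ an explicit conjugating element of $\GL_3$ is written down, so $F(\un{A})$ and $F(\un{B})$ are always conjugate. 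Non-conjugacy of the tuples themselves is then checked directly by computing the centralizer of $E_{12}$ and verifying that no element of it carries $E_{13}$ to $E_{32}$. Filling this construction into your outline would complete the argument; without it, the proof stands as an idea rather than a demonstration.
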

\begin{proof}
Obviously, it is enough to prove this lemma for $n=3$ and $m=2$.

Consider $\un{A},\un{B}\in M_3^2$ given by $\un{A}=(E_{12},E_{13})$ and $\un{B}=(E_{12},E_{32})$.  For an arbitrary $F\in\algX{2}$ we have $F=\sum_i \al_i W_i + \al X_1 + \be X_2 + \ga I_3$, where $W_i$ is a product of more than one generic matrices and $\al,\al_i,\be,\ga \in \FF$. Since $A_1 A_2=A_2 A_1=0$ and $B_1 B_2=B_2 B_1=0$, we obtain that   
$$F(\un{A}) = \matrTri{\ga}{\al}{\be}{0}{\ga}{0}{0}{0}{\ga} \;\text{ and }\;
F(\un{B}) = \matrTri{\ga}{\al}{0}{0}{\ga}{0}{0}{\be}{\ga}. 
$$
Note that $\GL_3\cdot F(\un{A}) = \GL_3\cdot F(\un{B})$, since $F(\un{A}) =  F(\un{B})$ in case $\be=0$ and 
$$g\cdot F(\un{A}) = F(\un{B}) \;\text{ for }\;
g=\matrTri{\al}{1}{0}{0}{\al}{\be}{\be}{0}{0}\in \GL_3$$
in case $\be\neq0$. Therefore, $\un{A}$ and $\un{B}$ are not separated by abstract invariants of width one. 

Let us show that $\GL_3\cdot \un{A} \neq \GL_3\cdot \un{B}$. Assume that for some  $g\in \GL_3$ we have that $g\cdot \un{A}=\un{B}$. The condition $gA_1g^{-1}=B_1$ implies that  
$$g=\matrTri{a_1}{a_2}{a_3}{0}{a_1}{0}{0}{a_4}{a_5}$$
for some $a_1,\ldots,a_5\in\FF$. Thus it follows from $gA_2g^{-1}=B_2$ that $a_1=0$,  which contradicts to the condition that $\det(g)\neq0$.
\end{proof}

\begin{lemma}\label{lemma_ex} The set 
$$\si_t(F), \; \zeta(F),\; \text{ where } 1\leq t\leq n \;\text{ and }\; F\in\algX{2},$$
 does not separate $\GL_n$-orbits on $D_n$ in case $n\geq4$. In particular, the set $\Set_{\si}\cup \Set_{\zeta}$ defined in Theorem~\ref{theo_main1} does not separate $\GL_n$-orbits on $D_n$ in case $n\geq4$.
\end{lemma}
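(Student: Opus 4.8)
The plan is to exhibit two canonical pairs in $D_n$ (for $n\geq 4$) that lie in distinct $\GL_n$-orbits but cannot be distinguished by any $\si_t(F)$ or $\zeta(F)$ with $F\in\algX{2}$. By Theorem~\ref{theo_main1} and the preceding analysis, pairs with different eigenvalues for the first matrix, or with different types, are already separated by the subset $\Set_{\si}\cup\Set_{\zeta}$ using only words of bounded degree; so the counterexample must consist of two pairs $\un{A},\un{B}\in D_n$ sharing the same diagonal first matrix $A_1=B_1=\diag(a_1,\dots,a_n)$ and the same type $G$, but differing in the values of some free parameters. The point of the example is that $\si_t$ and $\zeta$ detect only the characteristic polynomial and the vanishing locus of polynomial maps in the entries, and these are too coarse to pin down all the free parameters once $n\geq 4$: this is exactly the phenomenon already flagged after Example~\ref{ex_theo_main2}, where a single free parameter requires a genuine rank (not just a $\zeta$) to recover it.

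First I would fix $n=4$ (the general case follows by padding with extra diagonal entries and isolated vertices, since $D_n\supset D_4$ via block-diagonal embeddings that are harmless for both $\si_t$ and $\zeta$). I would pick a type $G$ on $v_1,v_2,v_3,v_4$ whose canonical $\ast$-matrix $\calcA$ has at least one $\ast$ entry off the diagonal sitting on a path of length $\geq 2$ — for instance the type used in Example~\ref{ex_theo_main2}, $v_4\to v_1\leftarrow v_2\to v_3$, where the entry $\calcA_{43}=\ast$ corresponds to the length-$3$ path and $\calcA_{31}=\ast$, $\calcA_{42}=\ast$ to shorter ones. Then I would choose two pairs $\un{A},\un{B}$ with the same $A_1=\diag(a_1,a_2,a_3,a_4)$, the same $1$-entries dictated by $G$, the same values of the diagonal free parameters $(A_2)_{ii}=(B_2)_{ii}$, but with at least one off-diagonal free parameter taking different nonzero values in $A_2$ versus $B_2$. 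Concretely I expect to take two of the free parameters in a correlated way so that all products $W(\un{A})$ and $W(\un{B})$ of generic matrices have the same trace-type symmetric functions; e.g. scaling one arrow-entry up and another down so that $\si_t$ of every monomial is preserved, while the actual orbit changes because the combination recovered by the rank computation in the proof of Theorem~\ref{theo_main2} (the $\eta^{4,3}$-type invariant) differs.

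The verification splits into three claims. (1) $\un{A}$ and $\un{B}$ are not $\GL_n$-conjugate: by Theorem~\ref{theoFutorny} it suffices that their canonical forms differ, i.e. that after running Belitskii/the normalization they have different free parameters; since we started with canonical pairs with the same type this is just checking the chosen parameters are genuinely different in the canonical description. (2) $\zeta(F)(\un{A})=\zeta(F)(\un{B})$ for all $F\in\algX{2}$: since $\zeta(F)(\un{A})$ depends only on whether $F(\un{A})=0$, and $F(\un{A})$, $F(\un{B})$ are matrices whose entries are the same polynomial expressions in the free parameters, I must arrange that $F(\un{A})=0 \iff F(\un{B})=0$; this holds if $\un{A}$ and $\un{B}$ are related by a common rescaling / a GL-action that is \emph{not} inner but is linear on each generic-matrix slot in a way that preserves zero-sets — precisely the kind of transformation $\un A\mapsto (A_1, \lambda A_2 + \mu I)$ composed with conjugation, under which zero patterns of all words are invariant. (3) $\si_t(F)(\un{A})=\si_t(F)(\un{B})$ for all $F$ and all $t$: this is the analogue for the characteristic polynomial, and is where I expect the main obstacle — I need the two pairs to have literally the same algebra of polynomial invariants $\calcP{4,2}^{\GL_4}$ evaluated on them, equivalently to lie in the same fiber of the invariant-theoretic quotient $M_4^2/\!/\GL_4$ while lying in different orbits. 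Such pairs exist exactly when the orbits are non-closed or the quotient fails to separate them; the cleanest route is to take $\un{B}=t\cdot \un{A}$ for a suitable one-parameter subgroup $t$ acting on the non-closed orbit of $\un{A}$ so that $\lim_{t\to 0}$ stays in $D_n$, giving $\un A,\un B$ in the same orbit closure hence indistinguishable by \emph{all} polynomial invariants, in particular by $\si_t(F)$, while still indecomposable/canonically distinct. Concretely, for the type $v_4\to v_1\leftarrow v_2\to v_3$ with the relevant free parameter $\al_{43}$, scaling $X_2\mapsto t X_2$ rescales $\al_{43}$ and all arrow-$1$'s uniformly but one can absorb the arrow rescalings by conjugation with $\diag(1,t,t,t)$-type matrices, leaving a net change only in $\al_{43}$ — and since $\al_{43}\mapsto t\,\al_{43}\to 0$, the pair with $\al_{43}=c\neq 0$ and the pair with $\al_{43}=0$ have intersecting orbit closures; as remarked in the introduction, separating polynomial invariants (hence $\si_t(F)$) cannot distinguish orbits with intersecting closures, and $\zeta(F)$ distinguishes $\al_{43}=0$ from $\al_{43}=c$ only if some word $F$ isolates that entry, which one checks directly does not happen because every word hitting position $(4,3)$ also forces a $1$-entry that is present in both. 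Finishing the argument then reduces to writing down the two $4\times4$ pairs explicitly and doing the (short) check that no product of $X_1,X_2$ has $(4,3)$-entry equal to a nonzero multiple of $\al_{43}$ alone; this is the routine computation I would not grind through here.
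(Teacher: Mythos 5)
You have located the right counterexample data: $n=4$, both pairs with the same diagonal $A_1$ and the same type $v_4 \to v_1 \leftarrow v_2 \to v_3$ from Example~\ref{ex_theo_main2}, differing only in the off-diagonal free parameter at position $(4,3)$, and padding to general $n\geq 4$ by adding isolated vertices. That part agrees with the paper. But the mechanism you propose for proving indistinguishability does not survive scrutiny, and it is not the paper's.

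\emph{First}, the degeneration you suggest does not move $\al_{43}$. If you scale $A_2\mapsto tA_2$ and conjugate by a diagonal $g$ to restore the $1$'s at $(2,1),(2,3),(4,1)$, you are forced into $t g_2/g_1 = t g_2/g_3 = t g_4/g_1 = 1$, whence $g_4/g_3 = (g_1/t)/(t g_2) \cdot (g_1/g_1) = \dots = 1/t$, and the $(4,3)$ entry becomes $t (g_4/g_3)\al = \al$: unchanged. So the one-parameter subgroup you describe stays inside a single orbit. \emph{Second}, the fallback of comparing $\al_{43}=c\neq 0$ against $\al_{43}=0$ is killed immediately by $\zeta\bigl(H_{\un{a},4}\,X_2\,H_{\un{a},3}\bigr)$: by \Ref{eq_H} this evaluates to $\zeta(c\,E_{43})=0$ on one pair and $\zeta(0)=1$ on the other. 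So you genuinely need \emph{both} free parameters $\al,\be$ nonzero, which your degeneration picture does not produce. \emph{Third}, even if you had an orbit-closure intersection, that would only force agreement of the polynomial invariants $\si_t(F)$; it says nothing about $\zeta(F)$, which is not a polynomial invariant, and in any case the orbit-closure formalism does not apply directly over a finite field, where the lemma is also asserted.

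The paper's actual argument is elementary and avoids all of this. Fix $\al,\be\in\FF\setminus\{0\}$ with $\al\neq\be$, take $A_2$ and $B_2$ as in the lemma (agreeing everywhere except $(4,3)$), and observe: the set $\calcL$ of matrices with the fixed zero pattern (rows $1,3$ and columns $2,4$ essentially empty) is a subalgebra of $M_4$ containing $A_1,A_2,B_1,B_2$ and, after permuting indices $1,3,2,4$, is lower triangular, so $\si_t$ on $\calcL$ ignores the $(4,3)$ entry. Then the set $\calcL^{(2)}$ of pairs $(L,N)\in\calcL^2$ that agree at all positions other than $(4,3)$ and satisfy $L_{43}=\al b$, $N_{43}=\be b$ for a common $b\in\FF$, is closed under linear combinations \emph{and} under componentwise products (a one-line check using the zero pattern of row $3$ and column $4$). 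Hence $(F(\un{A}),F(\un{B}))\in\calcL^{(2)}$ for every $F\in\algX{2}$, which gives simultaneously $\si_t(F(\un{A}))=\si_t(F(\un{B}))$ (triangularity) and $\zeta(F(\un{A}))=\zeta(F(\un{B}))$ (the $(4,3)$ entries are both zero or both nonzero since $\al,\be\neq 0$). Theorem~\ref{theoFutorny} shows the orbits are distinct. That closed-subalgebra observation is the missing key step in your proposal; the ``routine computation'' you deferred is precisely this closure property, and stating it explicitly is what makes the proof go through.
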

\begin{proof} Obviously, the statement of the lemma for $n>4$ follows from the case of $n=4$. 

Assume that $n=4$. Consider $A_1=B_1=\diag(a_1,\ldots,a_4)$, where $a_1,\ldots,a_4\in\FF$ are pairwise different,
$$A_2=\left(
\begin{array}{cccc}
0 & 0 & 0 & 0 \\
1 & 0 & 1 & 0 \\
0 & 0 & 0 & 0 \\
1 & 0 & \al & 0 \\
\end{array}
\right) \;\text{ and }\;
B_2=\left(
\begin{array}{cccc}
0 & 0 & 0 & 0 \\
1 & 0 & 1 & 0 \\
0 & 0 & 0 & 0 \\
1 & 0 & \be & 0 \\
\end{array}
\right)
$$
for non-zero $\al,\be\in\FF$ with $\al\neq \be$. Since $A_2,B_2\in\calcA$ for the canonical $\ast$-matrix from Example~\ref{ex_theo_main2}, namely, 
$$\calcA = \left(
\begin{array}{cccc}
\ast & 0 & 0 & 0 \\
1 & \ast & 1 & 0 \\
\ast & \ast & \ast & 0 \\
1 & \ast & \ast & \ast \\
\end{array}
\right) \;\text{ with }\;\;
G_{\!\calcA}: v_4 \longrightarrow v_1 \longleftarrow v_2 \longrightarrow v_3,$$
\noindent{}then $\un{A},\un{B}\in D_4$ have type $G_{\!\calcA}$. Theorem~\ref{theoFutorny} implies that $\GL_4\cdot\un{A}\neq \GL_4\cdot\un{B}$. 

Denote by $\calcL$ the set of all matrices
$$\left(
\begin{array}{cccc}
b_1 & 0 & 0 & 0 \\
b_2 & b_3 & b_4 & 0 \\
0 & 0 & b_5 & 0 \\
b_6 & 0  & b_7 & b_8 \\
\end{array}
\right)$$
for $b_1,\ldots,b_8\in\FF$. It is easy to see that  $\calcL$ is an algebra and $\calcL$ contains $A_1,A_2,B_1,B_2$. Denote by $\calcL^{(2)}$ the set of all pairs
$(L,N)\in \calcL^2$ such that
\begin{enumerate}
\item[$\bullet$] for every $1\leq i,j\leq 4$ with $(i,j)\neq (4,3)$ we have $L_{ij}=N_{ij}$; 

\item[$\bullet$] there exists $b\in\FF$ such that $L_{43}=\al b$ and $N_{43}=\be b$.
\end{enumerate}%
Obviously, $(A_1,B_1)$ and $(A_2,B_2)$ belong to $\calcL^{(2)}$. Note that if $(L_1,N_1)$ and $(L_2,N_2)$ belong to $\calcL^{(2)}$, then 
\begin{enumerate}
\item[$\bullet$] any linear combination of $(L_1,N_1)$ and $(L_2,N_2)$ lies in $\calcL^{(2)}$;

\item[$\bullet$] $(L_1L_2,N_1N_2)$ lies in $\calcL^{(2)}$.
\end{enumerate}
Therefore, for every $F\in\algX{2}$ we have that $(F(\un{A}),F(\un{B}))\in \calcL^{(2)}$. Note that if $(L,N)\in\calcL^{(2)}$, then $\si_t(L)=\si_t(N)$ for all $1\leq t\leq 4$ and $\zeta(L)=\zeta(N)$. Therefore, $f(\un{A})=f(\un{B})$ for every invariant $f$ from the formulation of this lemma. The required is proven.
\end{proof}

\end{document}